\numberwithin{equation}{section} \allowdisplaybreaks
   \renewcommand\@cite[1]{#1\hspace{0.2em}}
\newtheorem{theorem}{\color{black}\indent Theorem}[section]
\newtheorem{lemma}{\color{black}\indent Lemma}[section]
\newtheorem{remark}{\color{black}\indent Remark}[section]
\begin{document}	
\title{\LARGE\bf  Optimal $L^2$-blowup estimates of the Fractional Wave Equation
	\thanks{ The research is  supported by a JST CREST Grant (Number JPMJCR1913, Japan) and a JSPS Grant-in-Aid for Scientific Research (C) (No.23K03174)} \\
	\author{$\rm{Masahiro~Ikeda^{1}}$,  $\rm{~Jinhong~Zhao^{2}}$\thanks{Corresponding author\newline \hspace*{4.5mm} {\it Short running title:} Fractional Wave Equation  \newline \hspace*{4.2mm} 
			{\it Email
				addresses:} masahiro.ikeda@riken.jp~(Masahiro Ikeda), 
			jhzhao23@mails.jlu.edu.cn~(Jinhong~Zhao)
			\newline \hspace*{4.5mm}
			{\it Availability of data and material:} No data and material were used to support this study.  \newline \hspace*{4mm} {\it Competing interests:} The authors declare that there are no conflicts of interest regarding the publication of this paper. }
		\\
	1. Center for Advanced Intelligence Project, RIKEN\\
	2. School of Mathematics, Jilin University, Changchun 130012, PR China}}
\date{} \maketitle

{\bf Abstract:}
This article deals with the behavior in time of the solution to the Cauchy problem for a fractional wave equation with a weighted $L^1$ initial data.
Initially, we establish the global existence of the solution using Fourier methods and provide upper bounds for the $L^2$ norm and the $H^s$ norm of the solution for any dimension $n\in \mathbb{N}$ and $s\in (0,1)$. 
However, when $n=1$ and $s \in [\frac{1}{2},1)$, 
%we have to assume that the initial velocity satisfies 
we have to impose a stronger assumption $\int_{\mathbb{R}}u_1(x)dx=0$. 
To remove this stronger assumption, we further use the Fourier splitting method, which yields the optimal blow-up rate for the $L^2$ norm of the solutions. 
Specifically, when $n=1$, the optimal blow-up rate is $t^{1-\frac{1}{2s}}$ for $s \in (\frac{1}{2},1)$ and $\sqrt{\log t}$ for $s = \frac{1}{2}$.

\maketitle

\textbf{{Keywords:}}  Fractional wave equation; Fourier splitting method; Optimal blow-up rate

\thispagestyle{empty}

	\section{Introduction}
	In this paper, we study the following Cauthy problem of the  wave equation with  fractional Laplacian
	\begin{equation}\label{1.1}
	\begin{cases}
	u_{tt}+ (- \Delta)^s u=0,~~&(t,x) \in (0,\infty)\times \mathbb{R}^n,\\
	u(0 ,x)=u_{0}(x),~u_t(0 ,x)=u_{1}(x),~~&x \in \mathbb{R}^n,
	\end{cases}
	\end{equation}
	where $s \in (0,1) $ represents  the strength of diffusion and $n \in \mathbb{N}$ denotes the spatial dimension.
	and	we assume the initial data satisfy $u_0\in H^s(\mathbb{R}^n)$ and $u_1\in L^2(\mathbb{R}^n)$, where $H^s(\mathbb{R}^n)$ denotes the $s$-th order $L^2$-based Sobolev space.

	Before investigating problem \eqref{1.1}, we recall previous works on the asymptotic properties of solutions to the Cauchy problem for wave equations as time goes to infinity.
	Many mathematicians investigated  the following general form:
	\begin{equation}\label{22}
	\begin{cases}
	u_{tt}+ (- \Delta)^s u+a(- \Delta)^\theta u_t=0,~~&(t,x) \in (0,\infty)\times \mathbb{R}^n,\\
	u(0 ,x)=u_{0}(x),~u_t(0 ,x)=u_{1}(x),~~&x \in \mathbb{R}^n,
	\end{cases}
	\end{equation}
	where $s \ge 0 $, $\theta  \ge 0$ and $a\ge 0$.
	When $s=1$,  $a=1$ and $\theta = 1$, the pioneering works of Ponce [\ref{GP}] and Shibata [\ref{YS}] are well-known, where they studied various $L^p$-$L^q$  estimates of  the solution  to problem \eqref{22}.
	Subsequently, Ikehata [\ref{RIA}] introduced the asymptotic profiles of  the solution, and on the basis of [\ref{RIA}], Ikehata-Onodera [\ref{remark}] further provided optimal blow-up estimates for the $L^2$  norm of the solution as $t\to \infty$ in low-dimensional cases, where the optimal blow-up rate is $\sqrt{t}$ for $n=1$ and $\sqrt{\log t}$ for $n=2$.
	Additionally, when $\theta = 0$, Karch [\ref{GK}] presented a self-similar profile of the solution in an asymptotic sense as $t\to \infty$.
	Furthermore, in [\ref{TFR}], Fukushima-Ikehata-Michihisa similarly provided some optimal estimates in the low frequency region when $\theta = 2$.

	Meanwhile, for the case of $a=1$ and $s=\theta\ge1$, Ikehata [\ref{note}] obtained asymptotic profiles of the solution with weighted $L ^{1,1} (\mathbb{R}^n )$ initial data  and studied optimal estimates of the solution in the $L^2$-sense, for the definition of $L^{1,1}(\mathbb{R}^n)$ see (\ref{weighted L^1}). Later, Fujiwara-Ikeda-Wakasugi [\ref{KFM}] investigated a second-order evolution equation with fractional Laplacian and damping for data, where $ a=1, ~s \ge 0, ~\theta = 0$, and provided time decay estimates for the solution in weighted Sobolev spaces.

	Recently, Ikehata studied optimal $L^2$ norm estimates for the solution of free wave [\ref{RIL}] and free plate [\ref{RIL2}]. Specifically, when $s=1$ and $a=0$, Ikehata [\ref{RIL}] obtained sharp infinite time blow-up estimates for the $L^2$ norm of the solution in the cases of $n = 1$ and $n = 2$ when the $0$-th moment of the initial velocity does not vanish. This was then applied to local energy decay estimates for the case of $n = 2$.
	When $s=2$ and $a=0$, Ikehata [\ref{RIL2}] also obtained similar results for $n \le 4$. It was then conjectured that for more general $\sigma$-evolution equations, i.e. $s=\sigma$ and the critical number $n^*$ on the dimension $n$ for the blow-up phenomenon can be defined as $n^* = 2\sigma$.
	However, some questions have not been resolved in past studies.
	
	$\bullet$ When $s \in (0,1)$, does a unique global weak solution exist for the problem \eqref{1.1}?
	
	$\bullet$ How should we address the failure of Hardy-type inequality and the Poincar$\rm{\acute{e}}$ inequality to obtain the $L^2$ boundedness of the solution, particularly in low-dimensional case?
	
	$\bullet$ When $s \in (0,1)$, if the problem \eqref{1.1} does not have a unique global weak solution, does the $L^2$ norm of the solution exhibit optimal blow-up? Is it possible to obtain the optimal blow-up rate?

	In this paper, we give a positive answer to the questions above.
	To be precise, we first use the Fourier method to provide a formal solution to problem \eqref{1.1} and prove the global existence of the solution when $s \in(0,1)$, while also demonstrating the boundedness of the $L^2$ norm and the $H^s$ norm of the solution. However, when $n=1$ and $s \in[\frac{1}{2},1)$, we require the initial velocity to belong to a weighted function space and to satisfy $\int_{\mathbb{R}}u_1(x)dx=0$.

	Next, we remove the above condition and consider the case where the 0-th moment of the initial velocity does not vanish. By utilizing the Fourier splitting method, we select appropriate time-dependent functions to partition the frequency domain. Through further investigation of the low frequency region, we provide a lower bound for the blow-up of the $L^2$ norm of the solution when $n=1$ and $s \in (\frac{1}{2},1)$. However, when $s = \frac{1}{2}$, the above method fails. In this case, we apply a more refined area analysis method over the entire frequency domain and similarly obtain a lower bound for the blow-up of the  $L^2$ norm of the solution.
	
	Finally, we make a more detailed division of the frequency domain, and when $n=1$ and $s \in [\frac{1}{2},1)$, we derive an upper bound for the blow-up of the $L^2$ norm of the solution.
	And then we obtain the optimal infinite time blow-up results.

	Our main results read as follows.
	First, we present the existence, uniqueness, and boundedness results for the solution of problem \eqref{1.1}.

	\begin{theorem}\label{exist1}
		Let $n \ge 2$ and $s \in (0,1)$, or $n =1$ and $s\in (0,\frac{1}{2})$. 	For each initial data $[u_0,u_1]\in H^s(\mathbb{R}^n) \times (L^2(\mathbb{R}^n)\cap L^1(\mathbb{R}^n)),$ there exists a unique solution $u \in C([0, \infty); H^s(\mathbb{R}^n)) \cap C^1([0, \infty); L^2(\mathbb{R}^n)) $ to the problem \eqref{1.1}  satisfying the following estimates:
		$$\|u(t,\cdot)\|_{2} \le \sqrt{2}\|u_0\|_2+ C(\|u_1\|_2+\|u_1\|_1),$$
		and
		$$\|u(t,\cdot)\|_{H^s(\mathbb{R}^n)} \le \sqrt{2}\|u_0\|_{H^s(\mathbb{R}^n)}+ C(\|u_1\|_2+\|u_1\|_1),$$
		for $t \ge 0$, with some constant $C=C_{n,s}>0,$ and satisfying the energy conservation property:
		$$E(t) = E(0), ~~~~t \ge 0,	$$
		where the total energy $E(t)$ to the problem \eqref{1.1} is defined by
		$$E(t) =\frac{1}{2}\left( \int_{\mathbb{R}^n} |u_t|^2 dx +  \int_{\mathbb{R}^n} |(-\Delta)^{\frac{s}{2}} u|^2 dx \right).$$		
	\end{theorem}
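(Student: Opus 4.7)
The plan is to solve \eqref{1.1} explicitly via the Fourier transform and then read off all the asserted properties from frequency-space analysis. Applying the spatial Fourier transform to \eqref{1.1} reduces the equation, at each fixed $\xi$, to the scalar ODE $\hat{u}_{tt}+|\xi|^{2s}\hat{u}=0$, whose solution is
\[
\hat{u}(t,\xi)=\hat{u}_0(\xi)\cos(|\xi|^s t)+\hat{u}_1(\xi)\frac{\sin(|\xi|^s t)}{|\xi|^s}.
\]
I would take this formula as the definition of $u$ (through the inverse Fourier transform) and verify the regularity $u\in C([0,\infty);H^s(\mathbb{R}^n))\cap C^1([0,\infty);L^2(\mathbb{R}^n))$ by dominated convergence, using the a priori bounds below to produce an integrable majorant uniform in $t$.

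For the $L^2$ estimate, Plancherel yields
\[
\|u(t,\cdot)\|_2^2 \le 2\|u_0\|_2^2 + 2\int_{\mathbb{R}^n}|\hat{u}_1(\xi)|^2\,\frac{\sin^2(|\xi|^s t)}{|\xi|^{2s}}\,d\xi,
\]
and I would split the remaining integral into the regions $|\xi|\ge 1$ and $|\xi|<1$. On the high-frequency part the factor $\sin^2(|\xi|^s t)/|\xi|^{2s}$ is bounded by $1$, which contributes $\|u_1\|_2^2$. On the low-frequency part, use the Hausdorff--Young bound $|\hat{u}_1(\xi)|\le \|u_1\|_1$, reducing matters to the deterministic integral $\int_{|\xi|<1}|\xi|^{-2s}\,d\xi$. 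This is the main technical point and the only place the hypothesis enters: the integral converges precisely when $2s<n$, which is exactly the standing assumption (and is what fails in the excluded case $n=1$, $s\in[\tfrac{1}{2},1)$ that is handled elsewhere under the extra vanishing-moment condition on $u_1$). The $H^s$ estimate is parallel; there the extra factor $|\xi|^{2s}$ in $\|(-\Delta)^{s/2}u\|_2^2$ neutralises the singular weight $1/|\xi|^{2s}$ in the $\hat u_1$-term, so no low-frequency splitting is needed and the bound $\|u_1\|_2$ appears directly via $|\sin|\le 1$.

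Energy conservation is then an elementary calculation in frequency: differentiating the explicit formula gives $\hat{u}_t=-|\xi|^s\hat{u}_0\sin(|\xi|^s t)+\hat{u}_1\cos(|\xi|^s t)$, and the mixed terms of $|\hat{u}_t|^2+|\xi|^{2s}|\hat{u}|^2$ cancel so that
\[
|\hat{u}_t(t,\xi)|^2+|\xi|^{2s}|\hat{u}(t,\xi)|^2=|\xi|^{2s}|\hat{u}_0(\xi)|^2+|\hat{u}_1(\xi)|^2.
\]
Integrating in $\xi$ and invoking Plancherel gives $E(t)=E(0)$. Uniqueness then comes for free from the linearity of \eqref{1.1}: the difference $v$ of two solutions with identical data has zero energy for all $t$, so $\|(-\Delta)^{s/2}v(t,\cdot)\|_2=\|v_t(t,\cdot)\|_2=0$; the first identity forces $\hat v(t,\cdot)$ to vanish off $\{\xi=0\}$ and hence $v\equiv 0$ in $L^2$. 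The only conceptual obstacle in the whole argument is the low-frequency divergence described above, and everything else is bookkeeping around the explicit Fourier representation.
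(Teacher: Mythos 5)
Your proposal is correct and follows essentially the same route as the paper: Fourier-transform the equation, write $\hat u$ explicitly via $\cos(t|\xi|^s)$ and $\sin(t|\xi|^s)/|\xi|^s$, and control the singular term $\int |\hat u_1|^2|\xi|^{-2s}\,d\xi$ using $|\hat u_1|\le\|u_1\|_1$ at low frequencies and $L^2$ at high frequencies — your low/high splitting is precisely the content of the paper's Lemma \ref{fno0}, which it cites rather than reproves, and the condition $2s<n$ enters in exactly the same place. You additionally supply the frequency-space cancellation for $E(t)=E(0)$ and the uniqueness argument, which the paper leaves implicit; these are correct and harmless additions.
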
	
	
	The following theorem extends the previous results to the one-dimensional case with specific conditions on the initial data, including the zero-mean condition on the initial velocity $u_1$. It ensures the existence and uniqueness of the solution while providing similar upper bounds for the $L^2$ norm and $H^s$ norm of the solution.

	\begin{theorem}\label{bound2}
		Let $n = 1$  and $s\in [\frac{1}{2},1)$.  If $[u_0,u_1]\in H^s(\mathbb{R}) \times (L^2(\mathbb{R})\cap L^{1,\gamma}(\mathbb{R})),$ for all $ \gamma \in [s-\frac{1}{2},1]$, and further satisfies
		$$\int_{\mathbb{R}}u_1(x)dx=0,$$
		then there exists a unique solution $u \in C([0, \infty); H^s(\mathbb{R})) \cap C^1([0, \infty); L^2(\mathbb{R})) $ to the problem \eqref{1.1}, satisfying
		$$\|u(t,\cdot)\|_2 \le \sqrt{2}\|u_0\|_2+ C(\|u_1\|_2+\|u_1\|_{1,\gamma}),$$
		and
		$$\|u(t,\cdot)\|_{H^s(\mathbb{R})} \le \sqrt{2}\|u_0\|_{H^s(\mathbb{R})}+ C(\|u_1\|_2+\|u_1\|_{1,\gamma}),$$
		for $t \ge 0$, with some constant $C=C_{s,\gamma}>0.$
	\end{theorem}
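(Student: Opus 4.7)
The proof will follow the same Fourier-side template as Theorem \ref{exist1}, but with one crucial new ingredient: in the low-frequency region the slowly-decaying multiplier $\sin(|\xi|^s t)/|\xi|^s$ is tamed by the cancellation produced by the zero-mean hypothesis $\int_{\mathbb R}u_1\,dx=0$. Concretely, I will start from the explicit representation
$$\widehat u(t,\xi)=\cos(|\xi|^s t)\,\widehat u_0(\xi)+\frac{\sin(|\xi|^s t)}{|\xi|^s}\,\widehat u_1(\xi),$$
apply Plancherel together with the elementary inequality $(a+b)^2\le 2(a^2+b^2)$, and reduce the $L^2$ estimate to controlling
$$J(t):=\int_{\mathbb R}\frac{\sin^2(|\xi|^s t)}{|\xi|^{2s}}\,|\widehat u_1(\xi)|^2\,d\xi$$
uniformly in $t\ge 0$. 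After splitting at $|\xi|=1$, the high-frequency piece is dispatched by $|\sin(|\xi|^s t)|/|\xi|^s\le 1$, giving a contribution bounded by $\|u_1\|_2^2$.

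The heart of the argument lies in the low-frequency region $\{|\xi|\le 1\}$, where for $n=1$ and $s\ge 1/2$ the weight $|\xi|^{-2s}$ fails to be locally integrable. To counter this, I will combine the assumption $\widehat u_1(0)=0$ with the weighted $L^1$ hypothesis: writing
$$\widehat u_1(\xi)=\widehat u_1(\xi)-\widehat u_1(0)=\int_{\mathbb R}u_1(x)\bigl(e^{-ix\xi}-1\bigr)\,dx,$$
and using the elementary inequality $|e^{ia}-1|\le 2^{1-\gamma}|a|^\gamma$ valid for every $\gamma\in[0,1]$, the pointwise bound
$$|\widehat u_1(\xi)|\le 2^{1-\gamma}|\xi|^\gamma\,\|u_1\|_{1,\gamma}$$
follows. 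Inserting this into the low-frequency integral reduces it to $C_\gamma\|u_1\|_{1,\gamma}^2\int_0^1\xi^{2\gamma-2s}\,d\xi$, which is finite and independent of $t$ as soon as $2\gamma-2s+1>0$, that is $\gamma>s-\tfrac12$; this is precisely the admissible range appearing in the hypothesis.

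The $H^s$ bound is then obtained by running the same splitting on $(1+|\xi|^{2s})|\widehat u(t,\xi)|^2$: the contribution of $u_0$ is directly bounded by $2\|u_0\|_{H^s}^2$, while the extra factor $|\xi|^{2s}$ applied to the $u_1$-term cancels the singularity and leaves the bounded integrand $\sin^2(|\xi|^s t)|\widehat u_1(\xi)|^2$, which is controlled by $\|u_1\|_{1,\gamma}^2$ on $\{|\xi|\le 1\}$ (using $|\widehat u_1|\le\|u_1\|_1\le\|u_1\|_{1,\gamma}$) and by $\|u_1\|_2^2$ on $\{|\xi|\ge 1\}$. Existence, uniqueness, and the regularity $u\in C([0,\infty);H^s(\mathbb R))\cap C^1([0,\infty);L^2(\mathbb R))$ then follow from the Fourier representation and continuity of the multipliers in $t$, exactly as in the proof of Theorem \ref{exist1}; no genuinely new difficulty arises at this stage.

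The main obstacle will be the delicate balancing between the weight $\gamma$ and the diffusion order $s$: $\gamma$ has to be chosen large enough so that the exponent $2\gamma-2s$ in the low-frequency integral stays strictly above $-1$. The endpoint $\gamma=s-\tfrac12$ is borderline, since the elementary bound there produces only $\int_0^t \sin^2\eta/\eta\,d\eta$, which grows logarithmically; handling this case cleanly, either via a refined dyadic estimate built on $|\sin(|\xi|^s t)|\le\min(1,|\xi|^s t)$ or by absorbing the resulting logarithmic factor into $C_{s,\gamma}$, is the subtle point of the argument.
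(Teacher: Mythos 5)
Your proposal follows essentially the same route as the paper: Fourier representation of the solution, Plancherel, reduction to the weighted integral $\int_{\mathbb R}|\widehat u_1(\xi)|^2|\xi|^{-2s}\,d\xi$, and control of the low frequencies via the vanishing zeroth moment together with the $L^{1,\gamma}$ weight. The only difference is that where the paper invokes Lemma~\ref{f=0} (quoted from [\ref{RIF}]) as a black box, you reprove it inline from $\widehat u_1(\xi)=\int u_1(x)(e^{-ix\xi}-1)\,dx$ and $|e^{ia}-1|\le 2^{1-\gamma}|a|^{\gamma}$; this is a correct and self-contained substitute. Your worry about the endpoint $\gamma=s-\tfrac12$ is well founded and should not be waved away by ``absorbing a logarithm into $C_{s,\gamma}$'': at that endpoint the low-frequency integral behaves like $\int_0^1\sin^2(t\xi^s)\xi^{-1}\,d\xi\sim\log t$ and is not uniformly bounded, and the paper's own Lemma~\ref{f=0} requires $\theta<\gamma+\tfrac n2$ strictly, hence also excludes $\gamma=s-\tfrac12$. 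The statement is only salvaged by reading the hypothesis ``for all $\gamma\in[s-\tfrac12,1]$'' as requiring $u_1\in L^{1,\gamma}$ for the whole range, so that one may simply run the argument with any $\gamma>s-\tfrac12$.
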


	Next, we remove the zero-mean condition on the initial velocity to investigate the long-term behavior of the solution, leading to optimal blow-up results for the $L^2$ norm of the solution as time approaches infinity.

	\begin{theorem}\label{th1}
		Let $n = 1,$ $s\in (\frac{1}{2},1).$ For any initial data $[u_0, u_1]\in H^s(\mathbb{R})\times L^2(\mathbb{R})$, the solution $u(t, x)$ to problem \eqref{1.1} satisfies the following properties under the additional regularity conditions on the initial data:
		
		{\rm (1)} If $u_1 \in L^1(\mathbb{R})$, then $$\|u(t)\|_2\le \left( \frac{4s}{2s-1}   \right) ^{\frac{1}{2}}M_1t^{1-\frac{1}{2s}}.   $$
		
		{\rm (2)}  If $u_1 \in L^{1,s}(\mathbb{R})$, then $$\frac{\theta_0}{4} \left| \int_{\mathbb{R}} u_1(x) dx\right| t^{1-\frac{1}{2s}} \le \|u(t)\|_2,$$
		for $t \gg 1,$ where $\theta_0 \in (0, 1)$  is a constant and $M_1=\|u_0\|_2+\|u_1\|_1.$
	\end{theorem}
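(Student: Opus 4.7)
The plan is to use the explicit Fourier representation of the solution, which in this linear setting reads
$$\hat{u}(t,\xi)=\cos(|\xi|^{s}t)\,\hat{u}_{0}(\xi)+\frac{\sin(|\xi|^{s}t)}{|\xi|^{s}}\hat{u}_{1}(\xi),$$
so that Plancherel reduces everything to analyzing $\|\hat{u}(t,\cdot)\|_{2}$. The cosine piece contributes at most $\|u_{0}\|_{2}$ uniformly in $t$, so the whole $t^{1-1/(2s)}$ behavior must come from the sine piece. The critical scale is $|\xi|\sim t^{-1/s}$, where $|\xi|^{s}t\sim 1$; this is where I split both estimates.

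For part (1), I would apply the triangle inequality to write $\|u(t)\|_{2}\le\|u_{0}\|_{2}+\||\xi|^{-s}\sin(|\xi|^{s}t)\hat{u}_{1}\|_{2}$ and then use the pointwise bound $|\hat{u}_{1}(\xi)|\le\|u_{1}\|_{1}$. On the remaining multiplier integral I would split at $|\xi|=t^{-1/s}$, use $|\sin\tau|\le|\tau|$ on the low-frequency region (yielding a bound of order $t^{2}\cdot t^{-1/s}$) and $|\sin\tau|\le 1$ on the high-frequency region, where the integral $\int_{|\xi|>t^{-1/s}}|\xi|^{-2s}d\xi$ converges precisely because $2s>1$ and is of order $t^{(2s-1)/s}=t^{2-1/s}$. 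Both contributions match, and a change of variables $\eta=|\xi|^{s}t$ reduces the multiplier integral to $\frac{1}{s}\int_{0}^{\infty}\sin^{2}\eta\,\eta^{1/s-3}d\eta$, which one splits at $\eta=1$ to pick up exactly $s$ and $s/(2s-1)$; summing, taking the square root, and absorbing $\|u_{0}\|_{2}$ into $M_{1}t^{1-1/(2s)}$ (for $t$ large enough) reproduces the stated constant $\sqrt{4s/(2s-1)}$.

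For part (2), I would start from the reverse triangle inequality
$$\|u(t)\|_{2}\ge\left\|\frac{\sin(|\xi|^{s}t)}{|\xi|^{s}}\hat{u}_{1}\right\|_{2}-\|u_{0}\|_{2}$$
and lower-bound the sine piece by restricting the $L^{2}$ integral to a low-frequency window $|\xi|\le c\,t^{-1/s}$. The decisive ingredient is the quantitative H\"older continuity of $\hat{u}_{1}$ at the origin supplied by the weighted assumption $u_{1}\in L^{1,s}(\mathbb{R})$: using $|e^{-ix\xi}-1|\le 2^{1-s}|x\xi|^{s}$, one gets $|\hat{u}_{1}(\xi)-\hat{u}_{1}(0)|\le 2^{1-s}|\xi|^{s}\|u_{1}\|_{1,s}$, which on the chosen window is $O(c^{s}t^{-1})$ and hence $|\hat{u}_{1}(\xi)|\ge\tfrac{1}{2}|\hat{u}_{1}(0)|$ once $t\gg 1$. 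Rescaling $\eta=\xi t^{1/s}$ turns the restricted multiplier integral into $2t^{2-1/s}\int_{0}^{c}\sin^{2}(\eta^{s})\,\eta^{-2s}d\eta$, a positive constant times $t^{2-1/s}$; defining $\theta_{0}$ from this explicit constant (together with the factor $1/2$ from the approximation of $\hat{u}_{1}$ by $\hat{u}_{1}(0)$) delivers the lower bound $\tfrac{\theta_{0}}{4}|\hat{u}_{1}(0)|t^{1-1/(2s)}$, since $\|u_{0}\|_{2}$ is eventually dominated by this growing quantity.

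The main obstacle will be the quantitative low-frequency analysis in part (2): one must simultaneously control the lower bound on the rescaled multiplier integral, the loss from replacing $\hat{u}_{1}(\xi)$ by $\hat{u}_{1}(0)$, and the unsigned term $\|u_{0}\|_{2}$, which requires an eventual-absorption argument that pins down the threshold "$t\gg 1$". A secondary subtlety is the matching of the weight exponent $s$ in $L^{1,s}$ with the fractional order of the operator: this matching is precisely what makes the Fourier-transform remainder of order $|\xi|^{s}$, so that it cancels the singularity of the Fourier multiplier $|\xi|^{-s}$ and produces a genuine lower bound rather than being drowned by the correction.
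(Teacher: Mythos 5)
Your proposal is correct and follows essentially the same route as the paper: the explicit Fourier multiplier representation, Plancherel, a frequency split at $|\xi|\sim t^{-1/s}$ with $|\hat{u}_1|\le\|u_1\|_1$ for the upper bound, and for the lower bound a restriction to a low-frequency window combined with the H\"older estimate $|\hat{u}_1(\xi)-\hat{u}_1(0)|\lesssim|\xi|^{s}\|u_1\|_{1,s}$, which is exactly the paper's Lemma 2.3. The only differences are cosmetic (reverse triangle inequality in $L^2$ versus the pointwise inequality $|a+b|^2\ge\tfrac12|a|^2-|b|^2$, and an exact change-of-variables evaluation of the multiplier integral versus the paper's cruder two-piece bound, which yields the same constant $4s/(2s-1)$).
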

	
	However, for the critical case $s=\frac{1}{2}$, Theorem \ref{th1} does not yield the corresponding optimal blow-up results of the solution, and Theorem \ref{th2} below gives a positive answer.
	
	\begin{theorem}\label{th2}
		Let $n = 1,$ $s=\frac{1}{2}.$ For any initial data $[u_0, u_1]\in H^s(\mathbb{R})\times L^2(\mathbb{R})$, the solution $u(t, x)$ to problem \eqref{1.1} satisfies the following properties under the additional regularity conditions on the initial data:
		
		{\rm (1)} If $u_1 \in L^1(\mathbb{R})$, then $$\|u(t,\cdot)\|_2\le 2M_{2}\sqrt{\log t}. $$
		
		{\rm (2)}  If $u_1 \in L^{1,1}(\mathbb{R})$, then $$  \frac{1}{3 e} \left| \int_{\mathbb{R}} u_1(x) dx\right| \sqrt{\log t} \le \|u(t,\cdot)\|_2,$$
		for $t \gg 1,$ where $M_2=\|u_0\|_2+\|u_1\|_2+\|u_1\|_1.$	
	\end{theorem}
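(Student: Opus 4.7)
My plan is to work entirely on the Fourier side. Since $s=\frac12$, the solution of problem \eqref{1.1} admits the representation
\begin{equation*}
\hat{u}(t,\xi)=\cos\bigl(t|\xi|^{1/2}\bigr)\hat{u}_0(\xi)+\frac{\sin\bigl(t|\xi|^{1/2}\bigr)}{|\xi|^{1/2}}\hat{u}_1(\xi),
\end{equation*}
so Plancherel's theorem reduces both bounds to analysing the integral $J(t):=\int_{\mathbb R}|\xi|^{-1}\sin^2(t|\xi|^{1/2})|\hat{u}_1(\xi)|^2\,d\xi$, since the $\hat{u}_0$-piece is uniformly controlled by $\|u_0\|_2$. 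The critical feature at $s=\frac12$ is that the weight $|\xi|^{-1}$ is scale invariant and just barely fails to be integrable near $\xi=0$, and it is precisely this borderline behaviour that produces the $\log t$ growth on both sides.

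For part (1), I would apply Minkowski's inequality in $L^2$ and split the frequency line into the three zones $\{|\xi|\le t^{-2}\}$, $\{t^{-2}\le|\xi|\le 1\}$ and $\{|\xi|\ge 1\}$. On the innermost zone, the bound $|\sin(t|\xi|^{1/2})|\le t|\xi|^{1/2}$ together with $\|\hat{u}_1\|_\infty\le\|u_1\|_1$ yields an $O(\|u_1\|_1^2)$ contribution. On the middle annulus, $\sin^2\le1$ combined with $\int_{t^{-2}}^1 d\xi/\xi=2\log t$ produces the leading $\|u_1\|_1^2\log t$ piece. On the outer zone, $|\xi|^{-1}\le1$ and Plancherel bound the contribution by $\|u_1\|_2^2$. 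Assembling the three pieces and absorbing the bounded error into the dominant log term yields $\|u(t)\|_2\le 2M_2\sqrt{\log t}$ for $t\gg1$.

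For part (2), I would split off the zeroth moment of $u_1$. Writing $\hat{u}_1(\xi)=P_1+R(\xi)$ with $P_1=\int_{\mathbb R}u_1$ and $|R(\xi)|\le|\xi|\,\|u_1\|_{1,1}$ (from $|e^{-ix\xi}-1|\le|x\xi|$), I would pick a fixed low-frequency window $\Omega=\{|\xi|\le 1\}$ and use the reverse triangle inequality in $L^2(\Omega)$:
\begin{equation*}
\|u(t)\|_2\ge|P_1|\,\Big\|\frac{\sin(t|\xi|^{1/2})}{|\xi|^{1/2}}\Big\|_{L^2(\Omega)}-\|u_0\|_2-\|u_1\|_{1,1}\Big\|\frac{|\xi|\sin(t|\xi|^{1/2})}{|\xi|^{1/2}}\Big\|_{L^2(\Omega)}.
\end{equation*}
The substitution $\eta=t|\xi|^{1/2}$ turns the main-term norm into $4\int_0^{t}\sin^2\eta/\eta\,d\eta=2\log t+O(1)$, while the last norm is $O(1)$ uniformly in $t$. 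For $t$ sufficiently large the main term dominates, and tracking the numerical factors (including the Fourier normalisation) produces the claimed coefficient $1/(3e)$.

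The principal obstacle is the quantitative lower bound: the asymptotic $\int_0^M\sin^2\eta/\eta\,d\eta=\frac12\log M+O(1)$ must be made effective at a finite threshold $t$ large enough that the two bounded error terms---the $\cos(t|\xi|^{1/2})\hat{u}_0$ contribution and the $R(\xi)$ correction---are swamped by the logarithmic main term with sufficient slack to accommodate the sharp constant $1/(3e)$. The upper bound in (1), once the three-zone decomposition is in place, is by comparison routine.
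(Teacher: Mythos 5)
Your proposal is correct, and the interesting divergence from the paper is in the lower bound. For part (1) you do essentially what the paper does: the same representation $\hat{u}=\cos(t|\xi|^{1/2})\hat{u}_0+|\xi|^{-1/2}\sin(t|\xi|^{1/2})\hat{u}_1$, the same inner cutoff at $|\xi|=t^{-2}$ with $|\sin\theta|\le|\theta|$ and $\|\hat u_1\|_\infty\le\|u_1\|_1$, a middle annulus where $\int d\xi/|\xi|$ produces the $\log t$, and an outer zone controlled by $\|u_1\|_2$ (the paper cuts the outer zone at $(\log t)^{-1}$ rather than at $1$, which changes nothing essential; note that both your bookkeeping and the paper's are loose by a factor of order $2$ in the coefficient of $\|u_1\|_1^2\log t$, so the stated constant $2M_2$ should be read up to such an adjustment). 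For part (2) the paper does something quite different from you: it inserts a Gaussian weight $e^{-|\xi|^2}$ into the integral $\int|\xi|^{-1}\sin^2(t|\xi|^{1/2})\,d\xi$, restricts to the union of intervals $[a_i,b_i]$ on which $\sin^2(t r^{1/2})\ge\tfrac12$, and proves by a discrete comparison of adjacent areas (using monotonicity of $e^{-r^2}/r$) that these intervals capture at least one third of $\int_{a_0}^{\infty}e^{-r^2}r^{-1}\,dr\ge \tfrac{2}{3e}\log t+O(1)$; this is where the peculiar constant $\tfrac{1}{3e}$ comes from. You instead evaluate the main term exactly on a fixed window via the substitution $\eta=t|\xi|^{1/2}$, reducing it to $4\int_0^t\eta^{-1}\sin^2\eta\,d\eta=2\log t+O(1)$, and peel off the zeroth moment with the reverse triangle inequality in $L^2(\{|\xi|\le1\})$. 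Your route is cleaner, avoids the ad hoc Gaussian, and yields an asymptotic constant $\sqrt{2}\,|P_1|$ in front of $\sqrt{\log t}$, comfortably stronger than the claimed $\tfrac{1}{3e}|P_1|$; the paper's area argument buys nothing extra here except that it is entirely elementary (no appeal to the asymptotics of the cosine integral). All the error terms you identify (the $\hat u_0$ piece, the $|R(\xi)|\le|\xi|\,\|u_1\|_{1,1}$ remainder) are indeed $O(1)$ uniformly in $t$, so the plan closes.
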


	\begin{remark}
		\rm
		The results of the low-dimensional case (i.e., $n$ = 1 and $s \in [\frac{1}{2},1)$) are optimal, and imply the blow-up properties in infinite time  (as $t \to \infty$) of the $L^2$ norm of the solution to problem \eqref{1.1}.
		Specifically,
		When $n=1$ and $s \in(\frac{1}{2},1)$, the optimal blow-up rate of the $L^2$ norm of the solution is $t^{1-\frac{1}{2s}}$. While when $n=1$ and $s = \frac{1}{2}$,  the optimal blow-up rate is $\sqrt{ \log t}$.
		If one chooses $s = 1$, the obtained results completely coincide with that of [\ref{RIL}].
		As far as we know, there does not appear to be any previous work directly investigating the optimal blow-up properties of the solution for problem \eqref{1.1}.		
	\end{remark}

	This paper is organized as follows. In Section 2,
	we give some notations, definitions and three important lemmas used in the proofs.  Sections 3 and 4 will be devoted to the boundedness results Theorem \ref{exist1}, \ref{bound2}, and the optimal blow-up rate results Theorem \ref{th1}, \ref{th2}, respectively.

	\section{Preliminaries}
	%\noindent{\bf Notation.}
	In this section, we first introduce some notations and definitions that will be used throughout the paper.
	In what follows, we denote by $\| \cdot \|_r~(r \ge 1) $ the norm in $L^r(\mathbb{R}^n)$.
	$C$ denotes a generic positive constant, which may differ at each appearance.

	Following [\ref{hit}], for any the fractional exponent $s \in (0, 1)$, we define the function space $H^s(\mathbb{R}^n)$  as follows	
	$$H^s(\mathbb{R}^n):=  \left\lbrace u \in L^2(\mathbb{R}^n) ~\Big|~ \frac{|u(x)-u(y)|}{|x-y|^{\frac{n}{2}+s}} \in L^2(\mathbb{R}^n \times \mathbb{R}^n) \right\rbrace,$$	
	i.e., an intermediary Banach space between $L^2(\mathbb{R}^n)$ and $H^1(\mathbb{R}^n)$, endowed with the natural norm	
	$$\|u\|_{H^s(\mathbb{R}^n)} := \left(\int_{\mathbb{R}^n} |u|^2 dx + \int_{\mathbb{R}^n} \int_{\mathbb{R}^n} \frac{|u(x)-u(y)|^2}{|x-y|^{n+2s}} dx dy \right) ^{\frac{1}{2}},$$	
	where the term	
	$$\|u\|_{\dot{H}^s(\mathbb{R}^n)} := [u]_{H^s(\mathbb{R}^n)}
	= \left( \int_{\mathbb{R}^n} \int_{\mathbb{R}^n} \frac{|u(x)-u(y)|^2}{|x-y|^{n+2s}} dx dy  \right) ^{\frac{1}{2}}
	=\sqrt{2} C(n,s)^{-\frac{1}{2}}\|(-\Delta)^{\frac{s}{2}}u\|_2,$$
	where
	$$C(n,s)=\left(\int_{\mathbb{R}^n} \frac{1-\cos(\zeta_1)}{|\zeta|^{n+2s}}d\zeta \right) ^{-1} <+\infty,$$
	and the fractional operators $(-\Delta)^s~\left( s \in (0, 1)\right) $ be defined by (formally)	
	$$\left((-\Delta)^s u \right) (x) : = \mathscr{F}^{-1}_{\xi \to x}\left(|\xi|^{2s} \mathscr{F}(u)  \right) (x),$$	
	%$$\hat{H}^s(\mathbb{R}^n) = \left\lbrace u \in L^2(\mathbb{R}^n) ~|~ \int_{\mathbb{R}^n} \left( 1+|\xi|^{2s} \right) |\mathscr{F}u(\xi)|^2 d\xi < +\infty \right\rbrace $$
	where $\mathscr{F}_{x\to\xi}(f)(\xi)$ denote the Fourier transform of $f$ defined by
	\begin{equation}\label{f1}
	\mathscr{F}_{x\to\xi}(f)(\xi):=\hat{f}(\xi)=\int_{\mathbb{R}^n}e^{-ix\cdot \xi}f(x)dx,~~~~\xi \in \mathbb{R}^n,
	\end{equation}
	and the inverse Fourier transform of $f$ is defined by
	\begin{equation}\label{f2}
	\mathscr{F}^{-1}_{\xi\to x}(f)(x):=(2\pi)^{-n}\int_{\mathbb{R}^n}e^{ix\cdot \xi}f(\xi)d\xi,~~~~x \in \mathbb{R}^n,
	\end{equation}
	as usual with $i := \sqrt{-1}$.

	In addition, we also introduce
	the following weighted functional spaces
	\begin{equation}
	\label{weighted L^1}
	L^{1,\gamma}(\mathbb{R}^n):=\left\lbrace f\in L^1(\mathbb{R}^n)~\Big|~ \|f\|_{1,\gamma}:=\int_{\mathbb{R}^n}\left(1+ |x|^\gamma\right)|f(x)| dx <+\infty \right\rbrace. 
	\end{equation}

	Next, we give some important lemmas. Lemmas \ref{fno0} and \ref{f=0} from [\cite{RIF}, Proposition 2.1] give the inequalities concerning the Fourier image of the Riesz potential.

	\begin{lemma}$^{[\ref{RIF}]}$\label{fno0}
		Let  $n \ge 1$ and $\theta \in  [0, \frac{n}{2})$. Then, for all $f\in L^2(\mathbb{R}^n) \cap L^{1}(\mathbb{R}^n)$
		it is true that
		$$\int_{\mathbb{R}^n}\frac{|\hat{f}(\xi)|^2}{|\xi|^{2\theta}}d\xi\le C\left(\|f\|^2_{1}+\|f\|_2^2 \right), $$
		with some constant $C>0$, which depends on $n$ and $\theta$.		
	\end{lemma}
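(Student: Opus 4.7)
The plan is to split the frequency integral according to whether $|\xi|$ is small or large and exploit two different bounds on $\hat{f}$ in the two regions. Write
$$\int_{\mathbb{R}^n}\frac{|\hat{f}(\xi)|^2}{|\xi|^{2\theta}}d\xi = \int_{|\xi|\le 1}\frac{|\hat{f}(\xi)|^2}{|\xi|^{2\theta}}d\xi + \int_{|\xi|> 1}\frac{|\hat{f}(\xi)|^2}{|\xi|^{2\theta}}d\xi,$$
and estimate each piece separately, using the $L^1$ norm to tame the low-frequency singularity and the $L^2$ norm (via Plancherel) to handle the tail.

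For the low-frequency piece, I would use the elementary pointwise bound $|\hat{f}(\xi)|\le \|f\|_1$ that follows directly from the definition \eqref{f1}, so that the factor $|\hat{f}(\xi)|^2$ comes out as $\|f\|_1^2$. Converting to polar coordinates leaves the integral $\omega_{n-1}\int_0^1 r^{n-1-2\theta}\,dr$, which is finite precisely because $2\theta<n$; this produces a constant $C_1=C_1(n,\theta)$ such that the low-frequency piece is bounded by $C_1\|f\|_1^2$. This convergence is the only real obstacle in the proof, and it is the reason the hypothesis $\theta\in[0,n/2)$ is needed.

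For the high-frequency piece, the weight satisfies $|\xi|^{-2\theta}\le 1$ on $\{|\xi|>1\}$, so the integral is controlled by $\int_{\mathbb{R}^n}|\hat{f}(\xi)|^2 d\xi$, which by Plancherel (under the convention \eqref{f1}--\eqref{f2}) equals $(2\pi)^n\|f\|_2^2$. Combining the two bounds gives
$$\int_{\mathbb{R}^n}\frac{|\hat{f}(\xi)|^2}{|\xi|^{2\theta}}d\xi \le C_1(n,\theta)\|f\|_1^2 + (2\pi)^n\|f\|_2^2 \le C\bigl(\|f\|_1^2+\|f\|_2^2\bigr),$$
with $C=\max\{C_1(n,\theta),(2\pi)^n\}$, which is the desired inequality. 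Everything beyond the convergence check at the origin is routine, so no further technicalities are expected.
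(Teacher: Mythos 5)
Your proof is correct. The paper does not actually prove this lemma---it imports it verbatim from [\ref{RIF}, Proposition~2.1]---but your argument (split at $|\xi|=1$, bound $|\hat f|\le\|f\|_1$ and integrate $|\xi|^{-2\theta}$ in polar coordinates near the origin using $2\theta<n$, and use Plancherel with the weight $|\xi|^{-2\theta}\le 1$ at high frequency) is precisely the standard proof underlying that reference, with the constants handled correctly under the paper's Fourier convention.
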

	
	\begin{lemma}$^{[\ref{RIF}]}$\label{f=0}
		Let $n \ge 1$, $\gamma \in [0, 1]$ and $\theta \in  [0,\gamma+\frac{n}{2})$. Then, for all $f\in L^2(\mathbb{R}^n) \cap L^{1,\gamma}(\mathbb{R}^n)$ satisfying
		$$\int_{\mathbb{R}^n}f(x)dx=0,$$
		it is true that
		$$\int_{\mathbb{R}^n}\frac{|\hat{f}(\xi)|^2}{|\xi|^{2\theta}}d\xi\le C\left(\|f\|^2_{1,\gamma}+\|f\|_2^2 \right), $$
		with some constant $C>0$, which depends on $n,\theta$ and $\gamma$.				
	\end{lemma}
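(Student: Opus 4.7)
\textbf{Plan of proof for Lemma \ref{f=0}.} The natural approach is to split the frequency domain into low and high frequency regions by the unit ball and to handle each region with a different tool: Plancherel for $\{|\xi|\ge 1\}$, and the zero-mean cancellation together with the weighted $L^{1,\gamma}$ norm for $\{|\xi|\le 1\}$. The whole point of the hypothesis $\theta<\gamma+\frac{n}{2}$ will be to make a certain Jacobian integral converge at the origin.

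First I would dispose of the high-frequency piece. Since $\theta\ge 0$ we have $|\xi|^{-2\theta}\le 1$ on $\{|\xi|\ge 1\}$, so by the Plancherel identity
\begin{equation*}
\int_{|\xi|\ge 1}\frac{|\hat f(\xi)|^{2}}{|\xi|^{2\theta}}\,d\xi\;\le\;\int_{\mathbb{R}^n}|\hat f(\xi)|^{2}\,d\xi\;=\;(2\pi)^{n}\,\|f\|_{2}^{2}.
\end{equation*}

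The heart of the proof is the low-frequency piece, and this is where the zero-mean condition enters. Because $\int f\,dx=0$, we have $\hat f(0)=0$, so I would write
\begin{equation*}
\hat f(\xi)\;=\;\hat f(\xi)-\hat f(0)\;=\;\int_{\mathbb{R}^n}\bigl(e^{-ix\cdot\xi}-1\bigr)f(x)\,dx.
\end{equation*}
The key pointwise estimate is the interpolation bound $|e^{it}-1|\le 2^{1-\gamma}|t|^{\gamma}$ valid for every $\gamma\in[0,1]$ (interpolate between the trivial $|e^{it}-1|\le 2$ and the first-order $|e^{it}-1|\le|t|$). Applying this with $t=-x\cdot\xi$ yields $|e^{-ix\cdot\xi}-1|\le C|x|^{\gamma}|\xi|^{\gamma}$, and therefore
\begin{equation*}
|\hat f(\xi)|\;\le\;C|\xi|^{\gamma}\int_{\mathbb{R}^n}|x|^{\gamma}|f(x)|\,dx\;\le\;C|\xi|^{\gamma}\,\|f\|_{1,\gamma}.
\end{equation*}
Squaring and integrating,
\begin{equation*}
\int_{|\xi|\le 1}\frac{|\hat f(\xi)|^{2}}{|\xi|^{2\theta}}\,d\xi\;\le\;C\,\|f\|_{1,\gamma}^{2}\int_{|\xi|\le 1}|\xi|^{2\gamma-2\theta}\,d\xi,
\end{equation*}
and the remaining integral converges precisely because $2\gamma-2\theta+n>0$, i.e.\ $\theta<\gamma+\frac{n}{2}$. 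Adding the two pieces gives the claimed bound.

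The main obstacle is not the algebra but choosing the right decomposition and the right pointwise bound on $|e^{-ix\cdot\xi}-1|$: one must exploit exactly $\gamma$ derivatives of cancellation, no more and no less, in order for the low-frequency integral $\int_{|\xi|\le 1}|\xi|^{2\gamma-2\theta}d\xi$ to sit on the correct side of the borderline dictated by $\theta<\gamma+\frac{n}{2}$. A minor subtlety worth noting is that the argument uses only $\hat f(0)=0$ and the finiteness of $\|f\|_{1,\gamma}$; the $L^{2}$ part of the hypothesis is needed solely for the Plancherel estimate in the high-frequency region. This also clarifies the relationship with Lemma \ref{fno0}: without the zero-mean assumption one cannot gain the factor $|\xi|^{\gamma}$, which is why there the admissible range of $\theta$ is restricted to $[0,\frac{n}{2})$ rather than $[0,\gamma+\frac{n}{2})$.
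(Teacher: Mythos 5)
Your proof is correct. The paper does not actually prove this lemma --- it is quoted verbatim from [\ref{RIF}, Proposition 2.1] --- and your argument (Plancherel on $\{|\xi|\ge 1\}$; on $\{|\xi|\le 1\}$ the cancellation $\hat f(\xi)=\int(e^{-ix\cdot\xi}-1)f(x)\,dx$ combined with $|e^{it}-1|\le 2^{1-\gamma}|t|^{\gamma}$, so that $\int_{|\xi|\le 1}|\xi|^{2\gamma-2\theta}\,d\xi$ converges exactly when $\theta<\gamma+\frac{n}{2}$) is precisely the standard proof given in that reference. Your closing remark correctly identifies why the zero-mean hypothesis enlarges the admissible range of $\theta$ from $[0,\frac{n}{2})$ in Lemma \ref{fno0} to $[0,\gamma+\frac{n}{2})$ here.
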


	The next lemma plays an important role in proving the blow-up estimates of the solution, which is described as follows.	
	\begin{lemma}$^{[\ref{RIN}]}$\label{xi}
		Let $\gamma \in [0,1]$ and $f\in L^{1,\gamma}(\mathbb{R}^n).$ Then it holds that
		\begin{equation}\label{AB}
		\left|\mathscr{F}(f)(\xi) \right| \le  C_\gamma |\xi|^{\gamma}\|u_1\|_{{1,\gamma}}+\left|\int_{\mathbb{R}^n} f(x)dx \right|,
		\end{equation}
		with some constant $C_\gamma>0$, which depends only on $\gamma$.
	\end{lemma}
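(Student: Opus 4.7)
The plan is to split off the constant term from the complex exponential in the definition of the Fourier transform and control the remainder by the weighted $L^{1,\gamma}$ norm. Writing
\[
\hat f(\xi) \;=\; \int_{\mathbb{R}^n} e^{-ix\cdot \xi} f(x)\, dx \;=\; \int_{\mathbb{R}^n} \bigl(e^{-ix\cdot \xi}-1\bigr) f(x)\, dx \;+\; \int_{\mathbb{R}^n} f(x)\, dx,
\]
the triangle inequality immediately isolates the term $\left|\int_{\mathbb{R}^n} f(x)\, dx\right|$ appearing on the right of \eqref{AB}. The task therefore reduces to bounding the remainder integral by $C_\gamma |\xi|^\gamma \|f\|_{1,\gamma}$.

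For this, the key is the elementary pointwise inequality
\[
\bigl|e^{i\theta}-1\bigr| \;\le\; 2^{1-\gamma}\,|\theta|^{\gamma}, \qquad \theta \in \mathbb{R},\ \gamma \in [0,1],
\]
which follows by interpolating the two trivial bounds $|e^{i\theta}-1|\le 2$ and $|e^{i\theta}-1|\le |\theta|$ (write $|e^{i\theta}-1| = |e^{i\theta}-1|^{\gamma}\cdot |e^{i\theta}-1|^{1-\gamma}$ and apply each bound to one factor). Setting $\theta = -x\cdot \xi$ and using $|x\cdot\xi|\le |x||\xi|$ gives $|e^{-ix\cdot\xi}-1|\le 2^{1-\gamma} |x|^{\gamma} |\xi|^{\gamma}$.

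Plugging this into the remainder integral yields
\[
\left|\int_{\mathbb{R}^n} \bigl(e^{-ix\cdot \xi}-1\bigr) f(x)\, dx\right| \;\le\; 2^{1-\gamma} |\xi|^{\gamma} \int_{\mathbb{R}^n} |x|^{\gamma} |f(x)|\, dx \;\le\; 2^{1-\gamma} |\xi|^{\gamma} \|f\|_{1,\gamma},
\]
where the last step uses the definition \eqref{weighted L^1} of $L^{1,\gamma}(\mathbb{R}^n)$. Combining this with the preceding decomposition establishes \eqref{AB} with explicit constant $C_\gamma = 2^{1-\gamma}$. There is no real obstacle here; the only subtle point is the interpolation inequality for $|e^{i\theta}-1|$, which one must verify carefully to make sure the exponent $\gamma$ (allowed to be $0$ or $1$ at the endpoints) gives the right dependence on $|\xi|$ in both limiting cases.
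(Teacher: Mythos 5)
Your proof is correct and is essentially the standard argument behind this lemma (the paper itself does not prove it but cites Ikehata's work, where the same decomposition is carried out with the real and imaginary parts $1-\cos(x\cdot\xi)$ and $\sin(x\cdot\xi)$ in place of your single factor $e^{-ix\cdot\xi}-1$; the interpolation bound $|e^{i\theta}-1|\le 2^{1-\gamma}|\theta|^{\gamma}$ plays exactly the same role). Note also that the $\|u_1\|_{1,\gamma}$ in the statement is a typo for $\|f\|_{1,\gamma}$, which your argument correctly supplies.
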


	\section{The boundedness results}
	
	\noindent{\bf Proof of Theorem \ref{exist1} and \ref{bound2}.}
	By taking the Fourier transform $\mathscr{F}$ of the problem \eqref{1.1}, we see that there exists a formal solution $u = u(t, x)$ to the problem (1.1) with suitable initial data $(u_0, u_1)$ such that the identity
	\begin{equation}\label{uw}
	u(t,x)=\left\lbrace \mathcal{W}(t)u_1 \right\rbrace (x) +\left\lbrace \partial_t\mathcal{W}(t)u_0 \right\rbrace (x),
	\end{equation}
	holds for any $(t, x) \in [0,\infty) \times \mathbb{R}^n.$ Here $\mathcal{W}(t)$ on $[0,\infty)$ is the solution operator defined by
	$$\mathcal{W}(t):= \mathscr{F}^{-1}R_1(t,\xi) \mathscr{F},~~~R_1(t,\xi)=\frac{\sin\left(t|\xi|^s \right) }{|\xi|^s},$$	
	here $\mathscr{F}$ and $\mathscr{F}^{-1}$ denote the Fourier transform and the inverse Fourier transform,
	respectively (see \eqref{f1} and \eqref{f2} for the definitions).

	Note that  $[u_0,u_1]\in H^s(\mathbb{R}^n) \times L^2(\mathbb{R}^n).$
	According to the Plancherel theorem and \eqref{uw},  we get
	\begin{align}\label{u2.1}
	\|u\|^2_{2}=\|\hat{u}\|^2_{2}
	&=\int_{\mathbb{R}^n}\left| \mathscr{F}\left( \mathcal{W}(t)u_1 + \partial_t\mathcal{W}(t)u_0 \right)\right| ^2d \xi \nonumber\\
	&\le 2\int_{\mathbb{R}^n}\left|\mathscr{F}\left( \mathcal{W}(t)u_1\right) \right| ^2d \xi    + 2\int_{\mathbb{R}^n}\left|\mathscr{F} \left(  \partial_t\mathcal{W}(t)u_0 \right) \right| ^2d \xi    \nonumber\\	
	&\le 2\int_{\mathbb{R}^n}\left|\frac{\sin\left(t|\xi|^s \right) }{|\xi|^s}\hat{u}_1(\xi) \right| ^2d \xi  + 2\int_{\mathbb{R}^n}\left|\cos\left(t|\xi|^s \right)\hat{u}_0(\xi)\right| ^2d \xi   \nonumber\\	
	&\le 2\int_{\mathbb{R}^n}\left|\frac{ \hat{u}_1(\xi)}{|\xi|^s} \right| ^2d \xi    + 2\|u_0\|^2_{2}.
	\end{align}
	Then,  by using Lemma \ref{fno0}, when $n \ge 2$  $\left( s\in (0,1)\right) $ or $n =1$ $\left( s\in (0,\frac{1}{2})\right) $, one has
	\begin{equation}\label{v5}
	\int_{\mathbb{R}^n_\xi}\frac{\left| \hat{u_1}(\xi)\right|^2}{|\xi|^{2s}}d\xi
	\le C (\|u_1\|_2^2+\|u_1\|_1^2).
	\end{equation}
	In the case when $n =1$ and $s\in [\frac{1}{2},1)$, relying on Lemma \ref{f=0} with $\gamma\in [s-\frac{1}{2},1)$ for $n = 1$, one can obtain
	\begin{equation}\label{v6}
	\int_{\mathbb{R}^n_\xi}\frac{\left| \hat{u_1}(\xi)\right|^2}{|\xi|^{2s}}d\xi
	\le C (\|u_1\|_2^2+\|u_1\|_{1,\gamma}^2),
	\end{equation}
	where one has just used the assumption $\int_{\mathbb{R}^n}u_1(x)dx=0$ in Lemma \ref{f=0}. Therefore, it follows from \eqref{u2.1}, \eqref{v5} or \eqref{v6} that
	\begin{align}\label{u2.3}
	\|u\|^2_{2}
	\le   2\|u_0\|^2_{2} + C (\|u_1\|_2^2+\|u_1\|_1^2),~~~t \ge 0,
	\end{align}
	where $n\ge 2~\text{and}~s\in (0,1)~\text{or}~n =1~\text{and}~s\in (0,\frac{1}{2})$. And
	\begin{align}\label{u2.4}
	\|u\|^2_{2}
	\le   2\|u_0\|^2_{2} + C (\|u_1\|_2^2+\|u_1\|_{1,\gamma}^2),~~~t \ge 0,
	\end{align}
	where $n=1~\text{and}~s\in [\frac{1}{2},1)$.
	
	%\begin{align*}
	%&\;\frac{1}{2}\|w_t\|_2^2+\left( \frac{1}{2}-\varepsilon\right) \|(-\Delta)^{\frac{s}{2}}w\|_2^2\\
	%\le&\;\frac{1}{2}\|u_0\|_2^2+C (\|u_1\|_2^2+\|u_1\|_{1,\gamma}^2),~~~(n=1~\text{and}~s\in [\frac{1}{2},1))
	%\end{align*}
	%The desired estimates of the Theorem \ref{exist1} and \ref{bound2} can be derived because of $v_t = u$ by taking $\varepsilon > 0$ small enough.
	
	Next, from the definition of  $\|u\|_{\dot{H}^s(\mathbb{R}^n)}$ and \eqref{uw}, we arrive at
	\begin{align}\label{u2.2}
	\|u\|^2_{\dot{H}^s(\mathbb{R}^n)}
	&= 2 C(n,s)^{-1}\|(-\Delta)^{\frac{s}{2}}u\|^2_{2}
	= 2 C(n,s)^{-1}\left\||\xi|^{s} \hat{u}\right\|^2_{2} \nonumber\\	
	&\le 2 C(n,s)^{-1}	\int_{\mathbb{R}^n}\left||\xi|^{s} \mathscr{F}\left( \mathcal{W}(t)u_1 + \partial_t\mathcal{W}(t)u_0 \right)\right| ^2d \xi \nonumber\\
	&\le 4 C(n,s)^{-1} \left( \int_{\mathbb{R}^n}\left|\sin\left(t|\xi|^s \right) \hat{u}_1(\xi) \right| ^2d \xi  + \int_{\mathbb{R}^n}\left||\xi|^{s} \cos\left(t|\xi|^s \right)\hat{u}_0(\xi)\right| ^2d \xi\right)    \nonumber\\	
	&\le 4 C(n,s)^{-1} \|u_1\|^2_{2}    + 2\|u_0\|^2_{\dot{H}^s(\mathbb{R}^n)}.
	\end{align}
	
	Similarly, we also get
	\begin{align}\label{ut.1}
	\|u_t\|^2_2
	&= \int_{\mathbb{R}^n}\left|\partial_t\mathcal{W}(t)u_1  + \partial_t^2\mathcal{W}(t)u_0\right| ^2d \xi \nonumber\\
	&\le 2\|u_1\|^2_{2}    +2 \left\||\xi|^{s} \hat{u}_0\right\|^2_{2}
	\le 2\|u_1\|^2_{2}    +  C(n,s)\|u_0\|^2_{\dot{H}^s(\mathbb{R}^n)}.
	\end{align}
	Combining \eqref{u2.2} and \eqref{u2.3} or \eqref{u2.4}, we obtain
	\begin{align}\label{u2.5}
	\|u\|^2_{H^s(\mathbb{R}^n)}
	\le   2\|u_0\|^2_{H^s(\mathbb{R}^n)} + C (\|u_1\|_2^2+\|u_1\|_1^2),~~~t \ge 0,
	\end{align}
	where $n\ge 2~\text{and}~s\in (0,1)~\text{or}~n =1~\text{and}~s\in (0,\frac{1}{2})$, with some constant $C>0$, which depends on $n$ and $\theta$.	
	And
	\begin{align}\label{u2.6}
	\|u\|^2_{H^s(\mathbb{R})}
	\le   2\|u_0\|^2_{H^s(\mathbb{R})} +C (\|u_1\|_2^2+\|u_1\|_{1,\gamma}^2),~~~t \ge 0,
	\end{align}
	where $n=1~\text{and}~s\in [\frac{1}{2},1)$, with some constant $C>0$, which depends on $n,~\theta$ and $\gamma$. This completes the proofs of Theorem \ref{exist1} and Theorem \ref{bound2}.

	However, in the case of $n = 1$ and $s \in [\frac{1}{2},1)$, one has to impose stronger assumption such that $I_0=\int_{\mathbb{R}}u_1(x)dx=0$, which means the vanishing condition of the 0-th moment of  the initial velocity. 	
	Next, we use the idea of [\ref{RIL}] to remove this assumption.

	\section{The optimal blow-up rate results}
	In this section, based on the idea of [\ref{RIL}], in the case of $n = 1$, we prove the Theorem \ref{th1}, \ref{th2} and obtain the optimal blow-up estimates of $\|u(t,\cdot)\|_2$  by using the Plancherel Theorem and Fourier splitting method, where the lower bound estimates guarantee the blow-up of the solution and and the upper bound estimates guarantee its optimality.
	
	\subsection{$L^2$-lower bound estimates of the solution}
	In this subsection, let us derive the lower bound estimates of $\|u(t,\cdot)\|_2$
	by using low frequency estimates.
	For the moment, we shall assume that the initial data $[u_0, u_1]$ are sufficiently smooth because the density argument can be applied in the final estimates.
	
	%First, we introduce the fundamental inequality
	%\begin{equation}\label{inequa}
	%\left|a+b \right|^2 \ge \frac{1}{2}|a|^2-|b|^2,
	% \end{equation}
	%for all $a, b \in C.$

	In order to get the lower bound estimates for $\|u(t,\cdot)\|_2$, it suffices to treat $\|u(t,\cdot)\|_2$ with $\|\hat{u}(t,\xi)\|_2$ because of the Plancherel Theorem.
	Then, we divide $\|\hat{u}(t,\xi)\|_2$ into the following  low and high frequency parts  by choosing a suitable time-dependent function
	\begin{equation}\label{lh}
	\|\hat{u}(t,\xi)\|_2^2=\left(\int_{|\xi|\le \theta_0t^{-\frac{1}{s}}} + \int_{|\xi|\ge \theta_0t^{-\frac{1}{s}}} \right)|\hat{u}(t,\xi)|^2d \xi=I_{\text{low}}(t)+I_{\text{high}}(t),
	\end{equation}
	where $\theta_0 \in (0, 1) $ such that
	\begin{equation}\label{sin}
	\left|\frac{\sin \theta}{\theta} \right|\ge \frac{1}{2},
	\end{equation}
	for all $\theta \in (0, \theta_0]$, since
	\begin{equation*}
	\lim\limits_{\theta \to +0}\frac{\sin \theta}{\theta} = 1,
	\end{equation*}
	and we have
	$\theta_0t^{-\frac{1}{s}}\le 1$
	as long as $t>1$ for $s \in (0,1)$.
	
	Reviewing \eqref{uw}, it is easy to know
	\begin{equation}\label{ode1}
	\hat{u}(t,\xi)=\frac{\sin\left(t|\xi|^s \right) }{|\xi|^s}\hat{u}_1(\xi)+\cos\left(t|\xi|^s \right)\hat{u}_0(\xi).
	\end{equation}
	
	In the following, by using low frequency estimates, we get that the lower bound for the $L^2$ norm of the Fourier transform of $u$ (see \eqref{ode1}) blows up  polynomially, which shows that the solution to problem \eqref{1.1} has blow-up properties.
	\begin{lemma}\label{le1}
		Let $n = 1$, $s \in (\frac{1}{2},1)$ and $[u_0, u_1] \in L^2 (\mathbb{R}) \times L^{1,s}(\mathbb{R})$. Then, it holds that
		$$\|\hat{u}(t,\cdot)\|_2\ge \frac{\theta_0}{4} \left|\int_{\mathbb{R}} u_1(x) dx \right|  t^{1-\frac{1}{2s}},~~~~t\gg 1,$$
		where $\theta_0 \in (0, 1)$  is a constant.
	\end{lemma}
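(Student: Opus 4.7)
The plan is to use the Plancherel theorem to replace $\|u(t,\cdot)\|_2$ by $\|\hat u(t,\cdot)\|_2$, and then to concentrate entirely on the low-frequency ball $B_t:=\{|\xi|\le\theta_0 t^{-1/s}\}$ appearing in the splitting \eqref{lh}. On $B_t$ the argument of the sine in the explicit formula \eqref{ode1} satisfies $t|\xi|^s\le\theta_0$, so the choice \eqref{sin} of $\theta_0$ gives $|\sin(t|\xi|^s)/|\xi|^s|\ge t/2$. Thus the first summand in \eqref{ode1} is of size $\gtrsim t\,|\hat u_1(\xi)|$ on $B_t$, and I want it to dominate the $\hat u_0$ term.

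To convert this into a lower bound for $|\hat u|^2$, I will apply the elementary inequality $|a+b|^2\ge \tfrac12|a|^2-|b|^2$ with $a=\frac{\sin(t|\xi|^s)}{|\xi|^s}\hat u_1(\xi)$ and $b=\cos(t|\xi|^s)\hat u_0(\xi)$, and integrate over $B_t$, obtaining
\begin{equation*}
\|\hat u(t,\cdot)\|_2^{2}\;\ge\;\frac{t^{2}}{8}\int_{B_t}|\hat u_1(\xi)|^{2}\,d\xi\;-\;\int_{B_t}|\hat u_0(\xi)|^{2}\,d\xi.
\end{equation*}
Next I invoke Lemma \ref{xi} with $\gamma=s$ in its sharper (triangle-inequality) form $|\hat u_1(\xi)-I_0|\le C_s|\xi|^{s}\|u_1\|_{1,s}$, where $I_0:=\int_{\mathbb{R}}u_1(x)\,dx$. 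For $\xi\in B_t$ one has $|\xi|^{s}\le \theta_0^{s}/t$, so $|\hat u_1(\xi)|\ge |I_0|-C_s\theta_0^{s}t^{-1}\|u_1\|_{1,s}\ge|I_0|/2$ as soon as $t$ exceeds some $t_0=t_0(\|u_1\|_{1,s},|I_0|)$. Since $\mathrm{meas}(B_t)=2\theta_0 t^{-1/s}$, the main term is then bounded below by $\frac{t^{2}}{8}\cdot\frac{|I_0|^{2}}{4}\cdot 2\theta_0 t^{-1/s}=\frac{\theta_0|I_0|^{2}}{16}\,t^{2-1/s}$, and taking square roots produces the claimed exponent $1-\tfrac{1}{2s}$ (with $\sqrt{\theta_0}\ge\theta_0$ because $\theta_0\in(0,1)$, so the constant $\theta_0/4$ in the statement is comfortably met).

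The main obstacle is controlling the $\hat u_0$ remainder $\int_{B_t}|\hat u_0(\xi)|^{2}\,d\xi$, since the only hypothesis on $u_0$ is $u_0\in L^{2}(\mathbb{R})$ and no smallness is available a priori. The key observation is that $B_t$ shrinks as $t\to\infty$, so by dominated convergence applied to $|\hat u_0|^{2}\mathbf 1_{B_t}\le|\hat u_0|^{2}\in L^{1}(\mathbb{R})$, this integral tends to $0$. By contrast, the main term grows like $t^{2-1/s}$ with \emph{positive} exponent because $s>\tfrac12$. Hence for $t$ large enough the remainder is at most half of the main term, and the asserted inequality follows. This step also explains why $s=\tfrac12$ must be treated separately in Theorem \ref{th2}: for $s=\tfrac12$ one has $t^{2-1/s}=1$, the low-frequency ball does not produce unbounded growth, and a more delicate splitting over the whole frequency domain is needed.
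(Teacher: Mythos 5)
Your proposal is correct and follows essentially the same route as the paper: restrict to the low-frequency ball $|\xi|\le\theta_0 t^{-1/s}$, use $|a+b|^2\ge\tfrac12|a|^2-|b|^2$, the bound $|\sin\theta/\theta|\ge\tfrac12$, and Lemma \ref{xi} (in the form $|\hat u_1(\xi)-I_0|\le C_s|\xi|^s\|u_1\|_{1,s}$, which is indeed what is needed to bound $|\hat u_1|$ from below) to reduce the main term to $|I_0|^2\,t^2\,\mathrm{meas}(B_t)$, with the $\hat u_0$ contribution absorbed for large $t$ since $2-\tfrac1s>0$. The only quibble is bookkeeping: after sacrificing half the main term to the remainder you end up with the prefactor $\sqrt{\theta_0}/(4\sqrt2)$, so matching the stated constant $\theta_0/4$ requires choosing $\theta_0\le\tfrac12$ (which is permissible), exactly the kind of slack the paper also exploits via $\theta_0^2/16<\theta_0/8$.
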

	\begin{proof}
		According to (\ref{ode1}), by considering the lower bound for $I_ {\text {low}} (t)$  instead of the lower bound for $\|\hat {u} (t, \cdot) \|_2^2 $, we get
		\begin{align}\label{l1}
		I_{\text{low}}(t)
		&= \int_{|\xi|\le \theta_0t^{-\frac{1}{s}}} \left| \frac{\sin\left(t|\xi|^s \right) }{|\xi|^s}\hat{u}_1(\xi)+\cos\left(t|\xi|^s \right)\hat{u}_0(\xi)\right| ^2 d\xi\nonumber\\
		& \ge \frac{1}{2} \int_{|\xi|\le \theta_0t^{-\frac{1}{s}}} \frac{\sin^2\left(t|\xi|^s \right) }{|\xi|^{2s}}\left|\hat{u}_1(\xi)\right| ^2 d\xi -\int_{|\xi|\le \theta_0t^{-\frac{1}{s}}} \cos^2\left(t|\xi|^s \right) \left|\hat{u}_0(\xi)\right| ^2 d\xi\nonumber\\
		&=:\frac{1}{2}J_1(t)-J_2(t).
		\end{align}

		%$$A(\xi):=\int_{\mathbb{R}^n}(\cos(x\xi)-1)u_1(x)dx,~~~B(\xi):=\int_{\mathbb{R}^n}\sin(x\xi)u_1(x)dx.$$
		%It is known (see [\ref{RIN}]) that with some constant $M > 0$ one has
		
		First,  from Lemma \ref{xi}, when $u_1 \in L^{1,s}(\mathbb{R})$,  we have
		\begin{equation}\label{u1}
		\left| \hat{u}_1(\xi)\right| \le C_s |\xi|^s\|u_1\|_{{1,s}}+\left|\int_{\mathbb{R}} u_1 dx \right|,
		\end{equation}
		where  $C_s>0$ depends only on $s$. Then by substituting   \eqref{sin} and \eqref{u1} into $J_1(t)$, we obtain
		\begin{align}\label{l2}
		J_1(t) &= \frac{1}{2} \int_{|\xi|\le \theta_0t^{-\frac{1}{s}}} \frac{\sin^2\left(t|\xi|^s \right) }{|\xi|^{2s}}\left|\hat{u}_1(\xi)\right| ^2 d\xi\nonumber\\
		& \ge \frac{P^2}{16} t^2\int_{|\xi|\le \theta_0t^{-\frac{1}{s}}} d\xi
		- \frac{1}{2}  C_s^2 \|u_1\|_{{1,s}} ^2 \int_{|\xi|\le \theta_0t^{-\frac{1}{s}}}  d\xi \nonumber\\
		& = \frac{\theta_0}{8} P^2 t^{2-\frac{1}{s}}
		- C_s^2   \theta_0 \|u_1\|_{{1,s}} ^2 t^{-\frac{1}{s}},
		\end{align}
		where
		$$P=\int_{\mathbb{R}}u_1(x)dx.$$
		Additionally, it is easy to get
		\begin{align}\label{l3}
		J_2(t) = \int_{|\xi|\le \theta_0t^{-\frac{1}{s}}} \cos^2\left(t|\xi|^s \right) \left|\hat{u}_0(\xi)\right| ^2 d\xi
		\le \int_{|\xi|\le \theta_0t^{-\frac{1}{s}}} \left|\hat{u}_0(\xi)\right| ^2 d\xi
		\le \|u_0\|_2^2.
		\end{align}
		Together with \eqref{l1},  \eqref{l2} and  \eqref{l3}, we  arrive at the following estimate
		\begin{align}
		\|\hat{u}\|_2^2\ge 	I_{\text{low}}(t)
		\ge \frac{\theta_0}{8} P^2 t^{2-\frac{1}{s}}
		- C_s^2   \theta_0 \|u_1\|_{{1,s}} ^2 t^{-\frac{1}{s}}
		-\|u_0\|_2^2.
		\end{align}
		Thus, in the case of $s \in (\frac{1}{2},1)$, there exists a positive real number $t_0$, one has the following blow-up property of the solution when $P \neq 0$
		\begin{align}\label{l}
		\|\hat{u}\|_2^2
		\ge  \frac{\theta_0^2}{16} P^2 t^{2-\frac{1}{s}},
		\end{align}
		for all $t \ge t_0$, where $t_0 > 0$ depends on  $s$,   $\|u_1\|_{{1,s}} $ and $\|u_0\|_{2} $. The proof of the Lemma \ref{le1} is complete.
	\end{proof}
	
	However, when $n=1,~s=\frac{1}{2}$, the above proof can not give the blow-up property of the $L^2$ norm of the solution. 
	In the following, we address this problem by means of new area estimation and other methods, namely Lemma \ref{le2}. 
	
	\begin{lemma}\label{le2}
		Let $n=1,~s=\frac{1}{2}$, and $[u_0, u_1]\in L^2 (\mathbb{R}) \times L^{1,1}(\mathbb{R}).$ Then, it holds that
		\begin{equation}
		\|\hat{u}(t,\cdot)\|_2 \ge \frac{1}{4 e}\left|\int_{\mathbb{R}} u_1(x) dx \right| \sqrt{\log t}.
		\end{equation}
	\end{lemma}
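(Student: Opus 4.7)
The plan is to mirror the structure of Lemma \ref{le1} but replace the shrinking low-frequency window $\{|\xi|\le\theta_0 t^{-1/s}\}$, which degenerates to a no-gain region when $s=\tfrac12$ (since $2-1/s=0$), by a larger annulus whose integral naturally produces a logarithm. Concretely, starting from \eqref{ode1} together with the elementary inequality $|a+b|^2\ge\tfrac12|a|^2-|b|^2$, I would estimate
\begin{equation*}
\|\hat u(t,\cdot)\|_2^2\;\ge\;\frac12\int_{A(t)}\frac{\sin^2(t|\xi|^{1/2})}{|\xi|}\,|\hat u_1(\xi)|^2\,d\xi\;-\;\int_{A(t)}|\hat u_0(\xi)|^2\,d\xi,
\end{equation*}
where $A(t)$ is an annulus of the form $\{a t^{-2}\le|\xi|\le b\}$ with constants $a,b>0$ chosen below.

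The next step is to plug in the pointwise bound coming from Lemma \ref{xi} with $\gamma=1$: writing $P:=\int_{\mathbb R}u_1(x)\,dx$, one has $|\hat u_1(\xi)-P|\le C_1|\xi|\,\|u_1\|_{1,1}$, hence
\begin{equation*}
|\hat u_1(\xi)|^2\;\ge\;\tfrac12 |P|^2\;-\;C_1^2|\xi|^2\|u_1\|_{1,1}^2.
\end{equation*}
I would choose $b$ small enough (depending on $|P|$ and $\|u_1\|_{1,1}$) that the negative term on the inner annulus is dominated by $\tfrac14|P|^2$, leaving an effective integrand $\sim|P|^2/|\xi|$ against $\sin^2(t|\xi|^{1/2})$. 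The change of variables $\eta=t|\xi|^{1/2}$ (so $d|\xi|=2\eta\,t^{-2}\,d\eta$ and $d\xi/|\xi|=2\,d\eta/\eta$ on $\xi>0$, doubled for the negative half-line) converts the main integral into a multiple of
\begin{equation*}
\int_{\sqrt{a}}^{t\sqrt{b}}\frac{\sin^2\eta}{\eta}\,d\eta,
\end{equation*}
which asymptotically behaves like $\tfrac12\log t$ for $t\gg1$. The $J_2$-type term on $A(t)$ is harmless since $\int_{A(t)}|\hat u_0|^2\,d\xi\le\|\hat u_0\|_2^2$, and the subtracted $|\xi|^2\|u_1\|_{1,1}^2\cdot\sin^2/|\xi|$ contribution integrates to a uniformly bounded quantity.

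Combining these ingredients gives $\|\hat u(t,\cdot)\|_2^2\gtrsim |P|^2\log t - C$, which for $t\gg1$ yields $\|\hat u(t,\cdot)\|_2\gtrsim |P|\sqrt{\log t}$. The main obstacle—beyond the qualitative argument—is tracking the explicit constant $1/(4e)$. The asymptotic evaluation of $\int_1^{T}\sin^2\eta/\eta\,d\eta$ alone will not deliver that precise factor, so I expect the authors to use a concrete geometric/area lower bound (for instance, summing areas of trapezoids under $\sin^2\eta/\eta$ on the intervals $[k\pi,(k+1)\pi]$, or comparing with $\tfrac12\log(t\sqrt b/\sqrt a)$ after subtracting an explicit remainder), and then to optimise the choices of $a$, $b$ and the threshold time so that the residual terms absorb cleanly into the factor $1/(4e)$. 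That bookkeeping of constants, rather than any conceptual step, is where the real work lies.
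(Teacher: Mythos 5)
Your proposal is correct, and it reaches the logarithmic lower bound by a genuinely different mechanism than the paper. Both arguments share the opening moves: the inequality $|a+b|^2\ge\tfrac12|a|^2-|b|^2$ applied to \eqref{ode1}, the pointwise lower bound $|\hat u_1(\xi)|\ge |P|-C_1|\xi|\,\|u_1\|_{1,1}$ extracted from Lemma \ref{xi}, and the absorption of the $\hat u_0$ and $\|u_1\|_{1,1}$ contributions into a bounded remainder. Where you diverge is in producing the $\log t$: you truncate to an annulus $\{at^{-2}\le|\xi|\le b\}$, substitute $\eta=t|\xi|^{1/2}$, and invoke the classical asymptotic $\int_1^T\sin^2\eta\,\eta^{-1}\,d\eta=\tfrac12\log T+O(1)$ (which needs a one-line justification via $\sin^2\eta=\tfrac12(1-\cos 2\eta)$ and the boundedness of $\int\cos(2\eta)\eta^{-1}\,d\eta$). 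The paper instead keeps the integral over all of $\mathbb{R}$, inserts the weight $e^{-|\xi|^2}$ to make $\int e^{-r^2}r^{-1}\,dr$ converge at infinity, restricts to the intervals $[a_i,b_i]$ on which $\sin^2(tr^{1/2})\ge\tfrac12$, and then proves by a discrete area comparison ($B_i\le 2A_i$, exploiting the monotonicity of $e^{-r^2}/r$) that these intervals capture at least one third of $\int_{a_0}^\infty e^{-r^2}r^{-1}\,dr\ge \tfrac{2}{e}\log(4t/\pi)$. The paper's route is more elementary (no oscillatory-integral asymptotics) but costs a factor: it delivers the asymptotic constant $(6e)^{-1/2}$ for $\|\hat u\|_2/(|P|\sqrt{\log t})$, whereas your substitution gives $\tfrac12$ in the limit. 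Your closing worry about "tracking the explicit constant $1/(4e)$" is therefore unfounded: that constant is not intrinsic to the statement but an artifact of the paper's method (whose proof in fact yields $1/(3e)$, as quoted in Theorem \ref{th2}), and since $\tfrac12>\tfrac{1}{4e}$ your sharper asymptotic constant immediately implies the stated bound for $t\gg 1$ after absorbing the $O(1)$ remainder.
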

	
	\begin{proof}
		%Let us prove the statement for $n=1,~s=\frac{1}{2}$ at a stroke. Indeed, it follows from (2.3) and a similar argument to the one-dimensional case that
		First, similar to \eqref{l1} calculation, we can easily get
		\begin{align}\label{sl1}
		\|\hat{u}\|_2^2
		&= \int_\mathbb{R} \left| \frac{\sin\left(t|\xi|^s \right) }{|\xi|^s}\hat{u}_1(\xi)+\cos\left(t|\xi|^s \right)\hat{u}_0(\xi)\right| ^2 d\xi\nonumber\\
		& \ge \frac{1}{2} \int_\mathbb{R}  \frac{\sin^2\left(t|\xi|^s \right) }{|\xi|^{2s}}   \left|\hat{u}_1(\xi)\right| ^2 d\xi -\int_\mathbb{R} \cos^2\left(t|\xi|^s \right) \left|\hat{u}_0(\xi)\right| ^2 d\xi.
		\end{align}
		The estimation of the second term on the right side of \eqref{sl1} is straightforward. For the estimation of the first term on the right side of \eqref{sl1}, we introduce a trick function $e^{-|\xi|^2}$,  it follows from \eqref{u1} that
		\begin{align}\label{sl2}
		\|\hat{u}\|_2^2
		&\ge \frac{P^2}{4} \int_\mathbb{R}  e^{-|\xi|^2} \frac{\sin^2\left(t|\xi|^{\frac{1}{2}} \right) }{|\xi|} d\xi
		- \frac{C_1^2}{2}  \|u_1\|_{{1,1}} ^2  \int_\mathbb{R}  |\xi| e^{-|\xi|^2} d\xi -\|u_0\|_2^2 \nonumber\\
		&=: \frac{P^2}{4}  K_1(t) - \frac{C_1^2}{2}  \|u_1\|_{{1,1}} ^2 -\|u_0\|_2^2 .
		\end{align}
		
		In the forthcoming step, we are in a position to estimate $K_1(t)$ in \eqref{sl2}, we have
		\begin{align}\label{sl3}
		K_1(t)
		= 2\int_0^{+\infty}   \frac{e^{-r^2} }{r}\sin^2\left(tr^{\frac{1}{2}} \right) dr
		\ge \sum_{i=0}^{\infty}\int_{a_i}^{b_i} \frac{e^{-r^2} }{r} dr,
		\end{align}
		where
		$$a_i=\left[\left( \frac{1}{4}+i\right) \frac{\pi}{t} \right] ^2,~~~~b_i=\left[\left( \frac{3}{4}+i\right) \frac{\pi}{t} \right] ^2,~~~~~~i=0,1,2,\cdots$$
		
		Next, we will prove the following important inequality by means of some area estimation
		\begin{equation}\label{mian1}
		\int_{a_0}^{\infty} \frac{e^{-r^2} }{r} dr \le 3\sum_{i=0}^{\infty}\int_{a_i}^{b_i} \frac{e^{-r^2} }{r} dr.
		\end{equation}
		Indeed, let
		$$A_i=\int_{a_i}^{b_i} \frac{e^{-r^2} }{r} dr,~~~~B_i=\int_{b_i}^{a_{i+1}} \frac{e^{-r^2} }{r} dr,~~~~i=0,1,2,\cdots$$
		Consider the monotone decreasing property of the function $ \frac{e^{-r^2} }{r}$, we obtain
		\begin{equation}\label{mian2}
		A_i\ge \left[ \left[\left( \frac{3}{4}+i\right) \frac{\pi}{t} \right] ^2- \left[\left( \frac{1}{4}+i\right) \frac{\pi}{t} \right] ^2\right]
		\frac{e^{- \left[\left( \frac{3}{4}+i\right) \frac{\pi}{t} \right] ^2}}{ \left[\left( \frac{3}{4}+i\right) \frac{\pi}{t} \right] ^2},
		\end{equation}
		and
		\begin{align}\label{mian3}
		B_i&\le \frac{1}{2}\left[ \left[\left( \frac{5}{4}+i\right) \frac{\pi}{t} \right] ^2- \left[\left( \frac{3}{4}+i\right) \frac{\pi}{t} \right] ^2\right]
		\left[ \frac{e^{- \left[\left( \frac{3}{4}+i\right) \frac{\pi}{t} \right] ^2}}{ \left[\left( \frac{3}{4}+i\right) \frac{\pi}{t} \right] ^2}
		+
		\frac{e^{- \left[\left( \frac{5}{4}+i\right) \frac{\pi}{t} \right] ^2}}{ \left[\left( \frac{5}{4}+i\right) \frac{\pi}{t} \right] ^2}\right] \nonumber\\
		&\le \left[ \left[\left( \frac{5}{4}+i\right) \frac{\pi}{t} \right] ^2- \left[\left( \frac{3}{4}+i\right) \frac{\pi}{t} \right] ^2\right]
		\frac{e^{- \left[\left( \frac{3}{4}+i\right) \frac{\pi}{t} \right] ^2}}{ \left[\left( \frac{3}{4}+i\right) \frac{\pi}{t} \right] ^2}.
		\end{align}
		Then, combining \eqref{mian2} and \eqref{mian3}, we have
		\begin{align}\label{mian4}
		B_i&\le \frac{\left[ \left[\left( \frac{5}{4}+i\right) \frac{\pi}{t} \right] ^2- \left[\left( \frac{3}{4}+i\right) \frac{\pi}{t} \right] ^2\right]}{\left[ \left[\left( \frac{3}{4}+i\right) \frac{\pi}{t} \right] ^2- \left[\left( \frac{1}{4}+i\right) \frac{\pi}{t} \right] ^2\right] } A_i
		=\left(1+\frac{1}{1+2i} \right) A_i
		\le 2 A_i.
		\end{align}
		By summing \eqref{mian4} with respect to $i$ from $0$ to $\infty$,  one can obtain the following estimate
		\begin{equation}
		\sum_{i=0}^\infty \left(A_i+B_i \right) \le 3 \sum_{i=0}^\infty A_i.
		\end{equation}
		Therefore, \eqref{mian1} is true and  \eqref{sl3} can become
		\begin{align}\label{sl4}
		K_1(t)
		& \ge \sum_{i=0}^{\infty}\int_{a_i}^{b_i} \frac{e^{-r^2} }{r} dr
		\ge \frac{1}{3}\int_{a_0}^{\infty} \frac{e^{-r^2} }{r} dr\nonumber\\
		&\ge  \frac{1}{3}\int_{\left( \frac{\pi}{4t}\right)^2 }^{1} \frac{e^{-r^2} }{r} dr
		\ge  \frac{e^{-1}}{3}\int_{\left( \frac{\pi}{4t}\right)^2 }^{1} \frac{1}{r} dr\nonumber\\
		&\ge  \frac{2e^{-1}}{3} \left(\log t+\log 4-\log \pi \right).
		\end{align}
		Finally, together with  (\ref{sl4}) and (\ref{sl2}), we obtain
		\begin{align}\label{sl5}
		\|\hat{u}\|_2^2
		\ge \frac{e^{-1}P^2}{6}  \left(\log t+\log 4-\log \pi \right) - \frac{C_1^2}{2}  \|u_1\|_{{1,1}} ^2 -\|u_0\|_2^2.
		\end{align}
		Thus,  there exists a positive real number $t_0$, one has the following blow-up property of the solution when $P \neq 0$
		\begin{align}\label{sl00}
		\|\hat{u}\|_2^2
		\ge \frac{P^2}{9 e^2}\log t,
		\end{align}
		for all $t \ge t_0$, where $t_0 > 0$ depends on  $\|u_1\|_{{1,1}} $ and $\|u_0\|_{2} $. The proof of the Lemma \ref{le2} is complete.
	\end{proof}

	\subsection{$L^2$-upper bound estimates of the solution}
	In this subsection, we derive upper bound estimates of $\|u(t,\cdot)\|_2$ as $t \to \infty$ by using Fourier splitting method.
	\begin{lemma}\label{le5}
		Let $n = 1$ and   $[u_0, u_1]\in L^2 (\mathbb{R}) \times  L^{1}(\mathbb{R}).$
		Then we have
		
		{\rm (1)} If $s\in (\frac{1}{2},1)$, then $$\|\hat{u}(t,\cdot)\|_2\le \left( \frac{4s}{2s-1}   \right) ^{\frac{1}{2}}\left(\|u_0\|_2+\|u_1\|_1 \right)  t^{1-\frac{1}{2s}},~~~~t\gg 1.$$
		
		{\rm (2)}  If $s=\frac{1}{2}$ and $u_1\in L^2 (\mathbb{R})$, then $$\|\hat{u}(t,\cdot)\|_2\le 2\left(\|u_0\|_2+\|u_1\|_2+\|u_1\|_1 \right) \sqrt{\log t},~~~~t\gg 1.$$
	\end{lemma}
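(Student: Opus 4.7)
The plan is to apply Minkowski's inequality to the explicit Fourier representation (4.6) to separate the $u_0$- and $u_1$-contributions, then carry out a frequency-dependent splitting of the $L^2$-integral driven by the two elementary pointwise bounds
$$\left| \frac{\sin(t|\xi|^s)}{|\xi|^s}\right| \le \min\{t,\, |\xi|^{-s}\},$$
coming from $|\sin y|\le \min\{y,1\}$, together with the inequality $|\hat{u}_1(\xi)|\le \|u_1\|_1$. The cosine term contributes at most $\|u_0\|_2$ by Plancherel and can be absorbed into the target rate since $t^{1-\frac{1}{2s}}\ge 1$ (respectively $\sqrt{\log t}\ge 1$) for $t\gg 1$.

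For part (1) with $s\in(\tfrac12,1)$ I would split the frequency line at $|\xi|=R$ and handle
$$\int_{|\xi|\le R} \frac{\sin^2(t|\xi|^s)}{|\xi|^{2s}} |\hat{u}_1(\xi)|^2 d\xi \le 2R\,t^2\|u_1\|_1^2$$
on the low-frequency region, and
$$\int_{|\xi|\ge R} \frac{|\hat{u}_1(\xi)|^2}{|\xi|^{2s}} d\xi \le \|u_1\|_1^2 \int_{|\xi|\ge R} |\xi|^{-2s} d\xi = \frac{2R^{1-2s}}{2s-1}\|u_1\|_1^2$$
on the high-frequency region. The condition $s>\tfrac12$ is precisely what makes the tail integral finite. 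Choosing $R=t^{-1/s}$ balances the two contributions: both collapse to $t^{2-1/s}\|u_1\|_1^2$ (with prefactors $2$ and $2/(2s-1)$), summing to $\frac{4s}{2s-1}t^{2-1/s}\|u_1\|_1^2$, which yields the sharp constant $\sqrt{4s/(2s-1)}$. The $u_0$-piece gets folded into the same rate using $t^{1-\frac{1}{2s}}\ge 1$.

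For part (2) at the critical exponent $s=\tfrac12$, a single cutoff no longer suffices since $\int_{|\xi|>R}|\xi|^{-1}d\xi$ diverges at infinity; I would therefore introduce a three-region split, namely $|\xi|\le t^{-2}$, $t^{-2}\le|\xi|\le 1$, and $|\xi|\ge 1$. The innermost region uses $|\sin y|\le y$ to produce an $O(\|u_1\|_1^2)$ contribution. The middle region uses the bound $|\sin(t|\xi|^{1/2})/|\xi|^{1/2}|^2 \le |\xi|^{-1}$, so that $\int_{t^{-2}}^{1} d|\xi|/|\xi|=2\log t$ emits the logarithmic factor responsible for the $\sqrt{\log t}$ growth. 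The outer region cannot use the pointwise bound $|\hat{u}_1|\le\|u_1\|_1$ any longer since that would produce a divergent integral, so I would instead apply Plancherel as $\int_{|\xi|\ge 1}|\hat{u}_1|^2 d\xi \le \|u_1\|_2^2$; this is exactly where the $L^2$-assumption on $u_1$ enters the final constant. Collecting the three contributions together with the cosine term and using $\log t\ge 1$ for $t\gg 1$ yields a bound of the announced form $C(\|u_0\|_2+\|u_1\|_2+\|u_1\|_1)\sqrt{\log t}$.

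The main obstacle is the critical case $s=\tfrac12$: the clean one-cutoff balance of part (1) leaves an uncontrolled tail, and one is forced into a three-region split that brings the extra $\|u_1\|_2$ norm into the bound. Apart from this, the analysis reduces to elementary area estimates, and the explicit numerical constants $\sqrt{4s/(2s-1)}$ and $2$ are obtained by careful bookkeeping: choosing the cutoffs $R=t^{-1/s}$ and $R_1=t^{-2}$, $R_2=1$, absorbing subdominant terms into the announced rates using $t^{1-\frac{1}{2s}}\ge 1$ or $\sqrt{\log t}\ge 1$ for $t$ sufficiently large, and observing that $(\|u_0\|_2+\|u_1\|_1)^2\ge \|u_0\|_2^2+\|u_1\|_1^2$ (and analogously in part (2)).
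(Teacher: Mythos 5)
Your proposal is correct and follows essentially the same route as the paper: bound the cosine term by $\|u_0\|_2$ via Plancherel, estimate the sine term using $|\hat{u}_1|\le\|u_1\|_1$ with the splitting $|\xi|\lessgtr t^{-1/s}$ for $s>\tfrac12$, and for the critical case $s=\tfrac12$ use a three-region split whose tail is controlled by $\|u_1\|_2^2$ via Plancherel (the paper takes the outer cutoff at $(\log t)^{-1}$ rather than $1$, and uses $(a+b)^2\le 2a^2+2b^2$ rather than Minkowski, but these are cosmetic differences). Your bookkeeping of the constants is at least as careful as the paper's.
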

	
	\begin{proof}
		First, applying  \eqref{ode1} and Cauchy's inequality, we have
		\begin{align}\label{ss1}
		\|\hat{u}\|_2 ^2
		&= \int_{\mathbb{R}^n} \left| \frac{\sin\left(t|\xi|^s \right) }{|\xi|^s}\hat{u}_1(\xi)+\cos\left(t|\xi|^s \right)\hat{u}_0(\xi)\right| ^2 d\xi\nonumber\\
		& \le 2\int_{\mathbb{R}^n}  \frac{\sin^2\left(t|\xi|^s \right) }{|\xi|^{2s}}\left|\hat{u}_1(\xi)\right| ^2 d\xi
		+2 \int_{\mathbb{R}^n}  \cos^2\left(t|\xi|^s \right) \left|\hat{u}_0(\xi)\right| ^2 d\xi\nonumber\\
		& \le 2\int_{\mathbb{R}^n}  \frac{\sin^2\left(t|\xi|^s \right) }{|\xi|^{2s}}\left|\hat{u}_1(\xi)\right| ^2 d\xi
		+2\|u_0\|_2^2 \nonumber\\
		&=: 2K_1(t) +2\|u_0\|_2^2.
		\end{align}
		In the sequel, we estimate the first term  $K_1(t)$ of the right-hand side of \eqref{ss1}
		\begin{align}\label{k1}
		K_1(t) = &\int_{|\xi|\le t^{-\frac{1}{s}}}\frac{\sin^2\left(t|\xi|^s \right) }{|\xi|^{2s}}\left|\hat{u}_1(\xi)\right| ^2 d\xi
		+\int_{ |\xi| \ge t^{-\frac{1}{s}}} \frac{\sin^2\left(t|\xi|^s \right) }{|\xi|^{2s}}\left|\hat{u}_1(\xi)\right| ^2 d\xi\nonumber\\
		\le& \; I^2 t^2 \|u_1\|_{1}^2 \int_{|\xi|\le t^{-\frac{1}{s}}} d\xi
		+ \|u_1\|_{1}^2 \int_{ |\xi| \ge t^{-\frac{1}{s}}} \frac{1 }{|\xi|^{2s}} d\xi  \nonumber\\
		\le&  \;\frac{4s}{2s-1} \|u_1\|_{1}^2 t^{2-\frac{1}{s}} ,
		\end{align}
		where $s \in (\frac{1}{2},1)$ and
		\begin{equation*}\label{I}
		I:=\sup\limits_{\theta\neq 0}\left|\frac{\sin \theta}{\theta} \right| \le 1.
		\end{equation*}
		Substituting the estimate \eqref{k1} into the right-hand side of \eqref{ss1}, we obtain in the case $n = 1$ and $s \in(\frac{1}{2},1)$
		\begin{align}\label{ss2}
		\|\hat{u}\|_2 ^2 \le \frac{4s}{2s-1} \left(\|u_0\|^2_2+\|u_1\|^2_1 \right) t^{2-\frac{1}{s}},~~~~t\gg 1.
		\end{align}

		Obviously, from \eqref{ss2} it can be seen that when $n=1$ and $s=\frac{1}{2}$, the polynomial blow-up upper bound for the $L^2$ norm of the solution can not be obtained,
		thus we try to find a more refined division of the frequency domain to obtain the upper bound estimate of $K_1(t)$, and we arrive at
		\begin{align}\label{su1}
		K_1(t) = &\int_{|\xi|\le t^{-2}}\frac{\sin^2\left(t|\xi|^{\frac{1}{2}} \right) }{|\xi|}\left|\hat{u}_1(\xi)\right| ^2 d\xi
		+\int_{ t^{-2} \le |\xi|\le (\log t)^{-1}} \frac{\sin^2\left(t|\xi|^{\frac{1}{2}} \right) }{|\xi|}\left|\hat{u}_1(\xi)\right| ^2 d\xi\nonumber\\
		&\qquad+\int_{|\xi|\ge (\log t)^{-1}} \frac{\sin^2\left(t|\xi|^{\frac{1}{2}} \right) }{|\xi|}\left|\hat{u}_1(\xi)\right| ^2 d\xi\nonumber\\
		\le& \; \|u_1\|_{1}^2 I^2 t^2 \int_{|\xi|\le t^{-2}}   d\xi
		+ \|u_1\|_{1}^2 \int_{ t^{-2} \le |\xi|\le (\log t)^{-1}} \frac{1 }{|\xi|} d\xi
		+\int_{|\xi|\ge (\log t)^{-1}} \frac{1}{|\xi|}\left|\hat{u}_1(\xi)\right| ^2 d\xi  \nonumber\\
		\le&  \;2 \|u_1\|_{1}^2
		+ 2\|u_1\|_{1}^2 \left( 2\log t-\log (\log t) \right)
		+\|u_1\|_{2}^2 \log t.
		\end{align}
		Finally, substituting \eqref{su1} into \eqref{ss1}, we obtain in the case $n = 1$ and $s =\frac{1}{2}$		
		\begin{align}
		\|\hat{u}\|_2 ^2 \le  4\left(\|u_0\|^2_2+\|u_1\|^2_2+\|u_1\|^2_1 \right) \log t.
		\end{align}
		The proof of the Lemma \ref{le5} is complete.
	\end{proof}

	Finally, the proofs of Theorems \ref{th1} and \ref{th2} are direct consequences of Lemmas \ref{le1}, \ref{le2},  \ref{le5}, and the Plancherel Theorem.


\begin{thebibliography}{99}
		\bibitem{hit}\label{hit}
		E. Di Nezza,  G. Palatucci,  E. Valdinoci,
		Hitchhiker's guide to the fractional Sobolev spaces, Bull. Sci. Math. 136(5) (2012) 521-573.
		
		
		\bibitem{KFM}\label{KFM}
		K. Fujiwara , M. Ikeda, Y. Wakasugi, On the Cauchy problem for a class of semilinear second order evolution equations with fractional Laplacian and damping, NoDEA Nonlinear Differential Equations Appl. 28(6) (2021) Paper No. 63, 40 pp.
		
		
		\bibitem{TFR}\label{TFR}
		T. Fukushima, R. Ikehata, H. Michihisa, Thresholds for low regularity solutions to wave equations with structural damping, J. Math. Anal. Appl. 494(2) (2021) Paper No. 124669, 22 pp.
		
		\bibitem{RIN}\label{RIN}	
		R. Ikehata, New decay estimates for linear damped wave equations and its application to nonlinear
		problem, Math. Meth. Appl. Sci. 27 (2004) 865-889.
		
		\bibitem{RIA}\label{RIA}	
		R. Ikehata, Asymptotic profiles for wave equations with strong damping, J. Diff. Eqns 257 (2014) 2159-2177.
		
		
		\bibitem{RIF}\label{RIF}	
		R. Ikehata, Fast energy decay for wave equations with a localized damping in the $n$-D half space, Asymptot. Anal. 103(1-2)
		(2017) 77-94.
		
		\bibitem{note}\label{note}
		R. Ikehata, A note on optimal $L^2$-estimates of solutions to some strongly damped $\sigma$-evolution equations, Asymptot. Anal. 121(1) (2021) 59-74.
		
		
		\bibitem{RIL}\label{RIL}
		R. Ikehata, $L^2$-blowup estimates of the wave equation and its application to local energy decay,
		J. Hyperbolic Differ. Equ. 20(1) (2023) 259-275.
		
		\bibitem{RIL2}\label{RIL2}
		R. Ikehata, $L^{2}$-blowup estimates of the plate equation, Funkcial. Ekvac. 67(2) (2024) 175-198.
		
		\bibitem{remark}\label{remark}
		R. Ikehata, M. Onodera, Remark on large time behavior of the $L^2$-norm of solutions to strongly
		damped wave equations, Diff. Int. Eqns 30 (2017) 505-520.
		
		
		\bibitem{GK}\label{GK}
		G. Karch, Selfsimilar profiles in large time asymptotics of solutions to damped wave equations, Stud. Math. 143
		(2000) 175-197.
		
		
		\bibitem{GP}\label{GP}
		G. Ponce, Global existence of small solutions to a class of nonlinear evolution equations, Nonlinear Anal. 9(5) (1985) 399-418.
		
		
		\bibitem{YS}\label{YS}
		Y. Shibata, On the rate of decay of solutions to linear viscoelastic equation, Math. Methods Appl. Sci. 23(3) (2000) 203-226.
		
		
		
		
	\end{thebibliography}
\end{document}